\newfont{\msam}{msam10}
\newtheorem{theorem}[]{Theorem}
\newtheorem{proposition}[]{Proposition}
\newtheorem{corollary}[]{Corollary}
\newtheorem{lemma}[]{Lemma}
\theoremstyle{definition}
\newtheorem{remark}[]{Remark}
\let\nc\newcommand
\nc{\la}{\label}
\def\bthm{\begin{theorem}}
\def\ethm{\end{theorem}}
\def\blemma{\begin{lemma}}
\def\elemma{\end{lemma}}
\def\bproof{\begin{proof}}
\def\eproof{\end{proof}}
\def\bprop{\begin{proposition}}
\def\eprop{\end{proposition}}
\def\bcor{\begin{corollary}}
\def\ecor{\end{corollary}}
\def\D{\mathcal{D}}
\def\tA{\tilde{A}}
\def\c{\mathbb{C}}
\nc{\Hom}{{\rm{Hom}}}
\nc{\chara}{{\rm{char}}}
\nc{\Ext}{{\rm{Ext}}}
\nc{\HOM}{\underline{\rm{Hom}}}
\nc{\EXT}{\underline{\rm{Ext}}}
\nc{\TOR}{\underline{\rm{Tor}}}
\nc{\End}{{\rm{End}}}
\nc{\GL}{{\rm{GL}}}
\nc{\SL}{{\rm{SL}}}
\nc{\Rep}{{\rm{Rep}}}
\nc{\ad}{{\rm{ad}}}
\nc{\dlim}{\varinjlim}
\newcommand{\Frac}{{\rm{Frac}}}
\newcommand{\Spec}{{\rm{Spec}}}
\newcommand{\Aut}{{\rm{Aut}_{\c}}}
\newcommand{\id}{{\rm{Id}}}
\title{Noncommutative Noether's problem for  complex reflection groups}
\author{Farkhod Eshmatov}
\address{Department of Mathematics, University of Sichuan, Chengdu, China}
\email{olimjon55@hotmail.com}
\author{Vyacheslav Futorny}
\address{Instituto de Matem\'atica e Estat\'\i stica, Universidade de S˜\~ ao Paulo\, S\~ao Paulo, Brasil}
\email{futorny@ime.usp.br}
\author{Sergiy Ovsienko}
\author{Joao Fernando Schwarz}
\address{Instituto de Matem\'atica e Estat\'\i stica, Universidade de S˜\~ ao Paulo\, S\~ao Paulo, Brasil}
\email{jfschwarz.0791@gmail.com}
\date{}                                           
\begin{document}

\maketitle

\begin{abstract}
We solve some noncommutative analogue of the
Noether's problem for the reflection groups by showing that the skew  field of fractions of the invariant subalgebra  of the Weyl algebra under the action of any   finite complex reflection group is a Weyl field, that is isomorphic to the skew field of fractions of some Weyl algebra. We also extend this result to the invariants of the ring of differential 
   operators on any finite dimensional torus. The results are applied to obtain  analogs of the Gelfand-Kirillov Conjecture for Cherednik algebras and Galois algebras. 
\end{abstract}

\section{Introduction}
Let $k$ be a field of characteristic $0$.  Let $G$ be a finite subgroup of $\GL_n(k)$
acting naturally by linear automorphisms on $S:=k[x_1, \ldots, x_n]$, and hence on the 
field of fractions $F:=\Frac(S)= k(x_1, \ldots, x_n)$. It is easy to show  $F^G=\Frac(S^G)$.
Then a well-known result due to E.Artin claims that the transcendence degree of
$F^G$ over $k$ is equal to $n$.  Now one can ask the following natural question:

$\textit{Noether's  problem. }$  Is $F^G$  a purely transcendental 
extension of $k$ or equivalently is $F^G$  isomorphic to $F$?

This problem has been studied by many authors. Let us briefly recall some of the results.
For more detailed discussion see \cite{D}.

By a classical theorem of E.Fisher, the answer to Noether's problem is positive for all 
$n\ge 1$ when $G$ is abelian and $k$ algebraically closed. 
For a general  $k$ this is no longer true. One of the first counterexamples were
produced in \cite{Le} for $k=\mathbb{Q}$ and when $G$ is a cyclic group of order eight.
The answer is also positive for all $n\ge 1$ when $G$ is a  complex reflection group,
since it is a consequence of the  Chevalley-Shephard-Todd theorem: $ S^G$  is isomorphic to $S$.

In general, for $n=1$  the positive answer is a straightforward consequence  of the classical
 theorem of L\"{u}roth, while for $n=2$ it is  a simple consequence of Castelnuovo's theorem. 
For $n=3$  the positive answer  was proved by  Burnside using Miyata's theorem.

Now, we will discuss a noncommutative version of the Noether problem.
Let $A_n=A_n(k)$ be the $n$-th Weyl algebra with usual generators $x_1, \ldots, x_n$ and $\partial_1, \ldots, \partial_n$, and let $F_n$ be its skew field of  fractions. 
Then the action of $G$ on $S$ naturally extends to $A_n$ and to $F_n$.
The following question was originally posed by J.Alev and F.Dumas 
(see \cite[Section $1.2.2$]{AD})

$ \textit{Noether's problem for} \,  \, A_n $: 
Is  $F_n^G$  isomorphic to $F_n \, ?$ \\
One of the motivations to study this problem comes from \cite[Theorem $5$]{L}, where  it has been 
shown that the the subalgebra of the invariant differential operators on the affine space under the action 
of a finite unitary reflection group $G$ is not isomorphic to the whole algebra of differential operators, that is  $A_n^G$  is not isomorphic to $A_n$.

Let $V$ be a finite dimensional vector space of dimension $n$ over $k$. By fixing a basis in $V$ one can identify $S(V^*)$  with $k[x_1, \ldots, x_n]$, where $x_1, \ldots, x_n$ is the dual basis in $V^*$. If $G$ be a finite subgroup of $GL(V)$ (more generally $V$ is a $G$-module) then it acts on $S(V^*)$ by linear automorphisms: $g.f(v)=f(g^{-1}v),$ $g \in G$, $f \in S(V^*)$, $v \in V$. This action can be naturally extended to the ring of differential operators $\D(k[x_1, \ldots, x_n])$ on $S(V^*)$. This induces a group of linear automorphisms  of the Weyl algebra $A_n$. 
The following was proved in  \cite{AD}

\begin{theorem}
$(a)$ Let $V$ be a  representation of $G$  which is a direct sum of $n$ representations of 
dimension one. Then $F_n^G \cong F_n$.\\
$(b)$ For any $2$-dimensional representation of $G$, we have $F_2^G \cong F_2$.
\end{theorem}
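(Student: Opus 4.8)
The plan is to prove both parts by reducing the computation of the invariant skew field to a manageable normal form, exploiting the fact that in low-dimensional or diagonal situations the group action can be ``linearized'' on well-chosen rational generators of $F_n$.

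\medskip

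\noindent\textbf{Part (a).}
First I would observe that when $V = L_1 \oplus \cdots \oplus L_n$ with each $L_i$ one-dimensional, the group $G$ acts on $S = k[x_1,\dots,x_n]$ diagonally: each $g \in G$ sends $x_i \mapsto \chi_i(g)\,x_i$ for characters $\chi_i \colon G \to k^\times$. The induced action on $A_n$ then sends $\partial_i \mapsto \chi_i(g)^{-1}\partial_i$. The key step is to pass to the new generators $y_i := x_i$ and $z_i := \partial_i x_i = x_i\partial_i + 1$ (Euler-type operators), or more conveniently the normalized elements $u_i := \partial_i x_i$, which are $G$-\emph{invariant}, together with $y_i = x_i$. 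One checks the commutation relations: the $y_i$ commute among themselves, the $u_i$ commute among themselves, and $[u_i, y_j] = \delta_{ij} y_j$ up to scalars, so after localizing we are in a mixed Weyl/quantum-torus type algebra whose fraction field is a Weyl field $F_n$. Now $G$ acts on this localized algebra by scaling only the $y_i$ (the $u_i$ being fixed), i.e. it acts through the torus on the ``commutative half''. Taking invariants of $F_n = \Frac\bigl(k\langle y_i^{\pm 1}, u_i\rangle\bigr)$ under this diagonal torus-type action: the invariant subfield is generated by the $u_i$ together with monomials $y^{\bs{a}}$ lying in the kernel lattice of the characters. A change of basis on this sublattice (Smith normal form of the character matrix) exhibits $F_n^G$ as generated by $n$ commuting invariant ``position'' monomials $w_i$ and $n$ ``momentum'' elements with the correct canonical relations $[p_i, w_j] = \delta_{ij} w_j$, hence $F_n^G \cong F_n$. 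The main obstacle here is bookkeeping: verifying that after the lattice change of basis the momentum-type elements can be chosen to pair correctly and still generate, and that no extra relations or degeneracies appear; this is essentially the argument of \cite{AD} and I would follow their normal-form computation.

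\medskip

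\noindent\textbf{Part (b).}
For an arbitrary $2$-dimensional representation $V$ of $G$, I would first reduce to the case where $G \subseteq \GL(V)$ acts faithfully (replace $G$ by its image). Then I would split into cases according to the structure of the finite subgroup $G \subseteq \GL_2(k)$. If $G$ is abelian, it is simultaneously diagonalizable over $\bar k$, so after base change and a linear change of coordinates we are in the diagonal situation of part (a) with $n = 2$, giving $F_2^G \cong F_2$ (one must check the base-field change is harmless, since the statement is over $k$ and conjugation by a matrix in $\GL_2(\bar k)$ descends appropriately — this uses that $F_2$ and $F_2^G$ have the right transcendence degree and centre). If $G$ is nonabelian, the strategy is to find a normal abelian subgroup $H \lhd G$ (e.g. take $H$ to be the subgroup acting by scalars, or a maximal abelian normal subgroup) and proceed in two stages: first compute $F_2^H \cong F_2$ by the abelian case, then observe that $G/H$ acts on this Weyl field $F_2^H$, and analyze that residual action. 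The crucial point is that the residual $G/H$-action on a set of canonical generators of $F_2^H$ is again by a ``tame'' (monomial or linear) transformation, so that a second application of the same circle of ideas — Galois descent for skew fields together with a normal-form argument — yields $F_2^G = (F_2^H)^{G/H} \cong F_2$.

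\medskip

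The hard part will be the nonabelian case of (b): controlling the residual $G/H$-action after the first round of invariants, since a priori the chosen canonical generators of $F_2^H$ are not permuted in an obvious way by $G/H$. I would handle this by choosing $H$ and the generators compatibly — e.g. taking $H = G \cap \{\text{scalars}\}$ so that $G/H \hookrightarrow \mathrm{PGL}_2$, using the classification of finite subgroups of $\mathrm{PGL}_2(\bar k)$ (cyclic, dihedral, $A_4$, $S_4$, $A_5$) to make the action explicit, and in each case exhibiting the Weyl-field isomorphism by hand, relying on the two-dimensional Noether problem for commutative fields (Castelnuovo) to guarantee the ``position'' subfield is purely transcendental and then lifting. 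This case-by-case dependence on the $\mathrm{PGL}_2$ classification is exactly what confines part (b) to dimension two, and is the technical heart of the proof.
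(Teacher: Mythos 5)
The paper itself offers no proof of this theorem --- it is quoted from \cite{AD} --- so I am assessing your proposal on its own terms.

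Your part (a) is correct, and is essentially the standard Fisher-type argument. After localizing at $x_1\cdots x_n$ the elements $u_i=\partial_i x_i$ are invariant (the relation $[u_i,y_j]=\delta_{ij}y_j$ is exact, no scalars needed), the invariant subalgebra is spanned by the monomials $x^{a}u^{c}$ with $a$ in the finite-index kernel lattice $\Lambda$ of $a\mapsto\prod_i\chi_i^{a_i}$, and a $\Z$-basis $b_1,\dots,b_n$ of $\Lambda$ together with the rational linear combinations $p_j=\sum_i\big((B^{T})^{-1}\big)_{ji}u_i$ yields Weyl pairs $\big(w_j,\,w_j^{-1}p_j\big)$ that generate $F_n^G$. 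The only points you must actually write out are that $\Frac(A_n^G)=\Frac(A_n)^G$ (Faith's theorem, Proposition~\ref{prop1}(iv)) and that the listed elements exhaust the invariants of the localization; both are routine.

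Part (b) as proposed has genuine gaps. First, in the abelian case the passage to $\bar k$ and back is not justified: ``same transcendence degree and centre'' is not a descent argument, and the whole difficulty of the noncommutative Noether problem is precisely that such invariants do not detect Weyl fields. (Over $\c$, the setting of the rest of the paper, this issue disappears since $G$ is already diagonalizable.) Second, and fatally for the strategy, the reduction $F_2^G=(F_2^H)^{G/H}$ gains nothing unless you control the residual $G/H$-action through the isomorphism $F_2^H\cong F_2$; the new canonical generators come from a lattice change of basis, and the residual action on them is merely some finite group of skew-field automorphisms, for which no general invariant theory exists --- this is exactly the assertion ``the residual action is again tame'' that you leave unproved. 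Concretely: for $H=G\cap\{\text{scalars}\}$ the subgroup $H$ can be trivial (e.g. $S_3$ in its standard $2$-dimensional representation), and for the binary icosahedral group in $\SL_2$ the only available $H$ is $\{\pm 1\}$, leaving the simple nonabelian group $A_5$ still to be handled with no further normal subgroups to induct on. So the hardest cases are untouched.

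The efficient route to (b) --- and the one consistent with how this paper proves Theorem~\ref{main} --- avoids skew-field manipulations entirely. For a linear action of a finite $G$ on a $2$-dimensional space over $\c$, the commutative invariant field $\c(x,y)^G$ is purely transcendental (the classical $n=2$ Noether problem, via Castelnuovo), the action is free off a $G$-stable hypersurface, and the Cannings--Holland isomorphism quoted as Theorem~\ref{cor1} gives $\D(B)^G\cong\D(B^G)$ on that locus. Passing to fraction fields and using Proposition~\ref{prop1} yields $F_2^G\cong\Frac\big(\D(\c(x,y)^G)\big)\cong F_2$ with no case analysis and no classification of subgroups of $\mathrm{PGL}_2$.
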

It follows from part $(a)$ that we have the positive answer to Noether's problem 
for $A_n$ for all $n \ge 1$ when $G$ is abelian and $k$ is
algebraically closed.    We should also point out that in \cite{AD}, this problem 
was discussed for the case when $G$ is not necessarily a finite group. 
In this case the above question should 
be slightly modified.

From now on we assume that all algebras  and  varieties are defined over $\c$.

To our knowledge, the only other case for which  Noether's problem for $A_n$ has been
considered is when $G \cong S_n$, a symmetric group of degree $n$, acting on $S(V^*)$ 
by permuting variables $x_i$. 
In \cite{FMO}, it was shown among other things that $F_n^{S_n} \cong F_n$.

Our first main result in this paper is
\begin{theorem}
\la{main}
Let $V$ be an $n$-dimensional vector space over $\c$ and
 $W$ be a finite complex reflection subgroup of $\GL(V)$. Then $F_n^W \cong F_n$.
\end{theorem}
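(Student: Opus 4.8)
The strategy is to reduce the statement for a general complex reflection group $W$ to the already-known cases via the structure theory of such groups. By the Shephard--Todd classification, any finite complex reflection group $W \subset \GL(V)$ decomposes as a direct product of irreducible complex reflection groups acting on complementary subspaces; since the Weyl field of a tensor-type situation factors accordingly (an isomorphism $F_{n_1}^{W_1}\otimes F_{n_2}^{W_2}$-style decomposition, realized inside $F_n$ because $F_n = F_{n_1} \otimes F_{n_2}$ when the variable sets are disjoint and $W = W_1 \times W_2$ respects that splitting), it suffices to treat the case $W$ irreducible. For irreducible $W$ the classification leaves exactly two families: the infinite family $G(m,p,\ell)$ (monomial groups: wreath-product-like subgroups of the group of monomial matrices), together with the $34$ exceptional groups. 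So the plan is: (1) invoke the product reduction; (2) handle the family $G(m,p,\ell)$ by an explicit change of variables; (3) handle the finitely many exceptionals.

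For the monomial family $G(m,p,\ell)$, the first step is to pass through the normal abelian subgroup. Write $G(m,p,\ell) = A \rtimes S_\ell$ where $A$ is the abelian group of diagonal matrices with entries $m$-th roots of unity whose product is an $(m/p)$-th root of unity. By part $(a)$ of the Alev--Dumas theorem (the $V$ here is literally a sum of one-dimensional representations for the diagonal subgroup, and in fact the monomial change of variables $x_i \mapsto x_i, \partial_i \mapsto \partial_i$ with the $S_\ell$-action by permutation is the relevant model), we know $F_n^A \cong F_n$ — but more importantly, we want the residual $S_\ell$-action on $F_n^A$ to again be, after a suitable coordinate change, a permutation action (or a sum-of-one-dimensionals action) so that we can apply either \cite{FMO} or Alev--Dumas part $(a)$ again. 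The concrete computation: after taking $A$-invariants, replace the generators $x_i^{m}$ (and the associated conjugated derivations) by new Weyl-algebra generators; this is a standard localization/invariant computation producing a copy of $F_\ell$ on which $S_\ell$ acts by permuting the $\ell$ coordinates, whence $F_n^{G(m,p,\ell)} = (F_n^A)^{S_\ell} \cong F_\ell^{S_\ell} \cong F_\ell = F_n$ by \cite{FMO}. The main obstacle is this step: verifying that the induced $S_\ell$-action on the Weyl field $F_n^A$ is genuinely conjugate to the standard permutation action, which requires care with the non-commuting derivation terms (the $\partial_i$ do not simply become $\partial$ of the new variables; one must track the corrections and check they assemble into a Weyl algebra with the symmetric group permuting a full set of generators).

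For the $34$ exceptional groups one argues more cheaply: each such $W$ acts on a space of dimension $n \le 8$, and in particular all rank-$2$ exceptional reflection groups are covered immediately by part $(b)$ of the Alev--Dumas theorem, so $F_2^W \cong F_2$ with no further work. For the higher-rank exceptionals one can again try to exhibit a normal subgroup for which the problem is known and an induced action of the quotient of a tractable type; alternatively, since these are finitely many explicit groups, one can fall back on a case-by-case verification. I expect the write-up to organize this as: product reduction, then a uniform treatment of $G(m,p,\ell)$ via the $A \rtimes S_\ell$ filtration, then dispatching the exceptionals (trivially in rank $2$, by ad hoc or reduction arguments in higher rank). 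The crux of the whole proof — and the place where the new content lies — is the monomial-group computation showing the quotient action is permutation-type.
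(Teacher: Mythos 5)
Your strategy (Shephard--Todd classification, product reduction, then the monomial family $G(m,p,\ell)$ via its normal diagonal subgroup, then the exceptionals) is not the route the paper takes, and as it stands it has two genuine gaps. First, the crux you yourself flag for $G(m,p,\ell)$ does not go through in the form stated when $p>1$. The $A$-invariant Laurent monomials are the $x^{b}$ with $b$ in the sublattice of $\mathbb{Z}^{\ell}$ generated by $m\mathbb{Z}^{\ell}$ and $(m/p)(1,1,\dots,1)$; any choice of lattice basis giving new Weyl generators (e.g.\ $y_i=x_i^{m}$ for $i<\ell$ together with $u=(x_1\cdots x_\ell)^{m/p}$) is not permuted by $S_\ell$ --- the residual action is monomial, not a permutation of a full generating set, so neither the result of \cite{FMO} for $S_\ell$ permuting Weyl variables nor Alev--Dumas part $(a)$ applies directly. (The paper's own treatment of the type $D_n$ torus action in Proposition \ref{proposition-action-in-even-case}, where the invariant ring is only a localization $\Lambda_P$ of a polynomial ring, illustrates exactly this complication.) Second, the fifteen exceptional groups of rank at least $3$ ($H_3$, $H_4$, $F_4$, $E_6$, $E_7$, $E_8$, and $G_{24}$ through $G_{34}$) are not handled at all: ``one can fall back on a case-by-case verification'' is a placeholder, not an argument, and these groups have no evident normal subgroups of tractable type.

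For comparison, the paper's proof is uniform and completely avoids the classification. It localizes $S=\c[V]$ at the invariant discriminant $\triangle$, so that $W$ acts freely on $V^{\rm reg}$ (Lemma \ref{lem-free}, via Steinberg's theorem); the Cannings--Holland theorem then gives $\D(S_\triangle)^{W}\cong\D((S_\triangle)^{W})$, and the Chevalley--Shephard--Todd theorem identifies $(S_\triangle)^{W}\cong(S^{W})_\triangle$ with a localization of a polynomial ring, whence $(\D(S)^{W})_\triangle\cong\D(S)_\triangle$ and $F_n^{W}\cong F_n$ after passing to skew fields of fractions. The two ingredients you would need to make your approach work --- a genuinely new computation for the monomial family with $p>1$ and some argument for the higher-rank exceptionals --- are precisely what this localization argument renders unnecessary. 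I would recommend either adopting that argument or, if you want to keep the classification route, supplying a complete treatment of the monomial action of $S_\ell$ on the localized invariant Weyl algebra and an actual mechanism for the exceptional groups.
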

Perhaps, this result is known to specialists but we could not find any proof. We are aware of an unpublished manuscript by I.Gordon where a similar statement is claimed without a proof. 

Next we extend this technique to the study of the skew field of fractions of the invariants for classical reflection groups in the case of any finite dimensional torus.

Our second main result  is
\begin{theorem}
\la{main-2}
Let $X=T^n$ be an $n$-dimensional torus, $\D(X)$ the ring of differential operators on $X$, $F(\D(X))$ the skew filed of fractions of $\D(X)$,  $W$ a classical complex  reflection group.  Then  there exists a natural action of $W$ on  
 $\D(X)$ which extends to  $F(\D(X))$ and  
 $F(\D(X))^W \cong F_n$.
\end{theorem}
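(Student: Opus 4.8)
The plan is to transport the proof of Theorem~\ref{main} to the toral setting, with the Chevalley--Shephard--Todd theorem replaced by the classical invariant theory of $W$ on a torus. First fix the action: realize $W$ as a group of algebraic automorphisms of $X=T^{n}=(\c^{*})^{n}$ --- for $W=S_{n}$ by permuting the coordinates $t_{i}$ (the maximal torus of $\GL_{n}$), for $W$ of type $B_{n}=C_{n}$ or $D_{n}$ by (even) signed permutations $t_{i}\mapsto t_{\sigma(i)}^{\pm1}$, i.e.\ the Weyl-group action on a suitable maximal torus, and in general by the monomial reflection action of $G(m,p,n)$. This induces an action of $W$ on $\D(X)=\c[t_{1}^{\pm1},\dots,t_{n}^{\pm1}]\langle\partial_{1},\dots,\partial_{n}\rangle$ and hence on $F(\D(X))$. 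Since $\c[X]=\c[t_{1},\dots,t_{n}][(t_{1}\cdots t_{n})^{-1}]$, the algebra $\D(X)$ is an Ore localization of $A_{n}$, so $F(\D(X))\cong F_{n}$; write $L:=F(\D(X))$ for this skew field equipped with its $W$-action.

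The commutative core is that Noether's problem for $W$ acting on $T^{n}$ has an affirmative answer. Put $R:=\c[X]$. Classical exponential invariant theory describes $R^{W}$ as (closely related to) the representation ring of the associated simply connected group: $R^{W}=\c[e_{1},\dots,e_{n-1},e_{n}^{\pm1}]$ for $W=S_{n}$ on the torus of $\GL_{n}$, $R^{W}=\c[e_{1}(\mathbf u),\dots,e_{n}(\mathbf u)]$ with $u_{i}=t_{i}+t_{i}^{-1}$ in types $B/C$, and a genuine polynomial ring for $D_{n}$ on the maximal torus of the simply connected group (Steinberg's theorem); in all cases $\Frac(R)^{W}=\Frac(R^{W})=\c(f_{1},\dots,f_{n})$ is purely transcendental of degree $n$. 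Put $\c(X):=\Frac(R)$. Since $\chara\c=0$, the extension $\c(X)/\c(f_{1},\dots,f_{n})$ is Galois with group $W$, of degree $|W|$.

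Next, build a copy of $F_{n}$ inside $L^{W}$, exactly as in the proof of Theorem~\ref{main}. The pairwise commuting derivations $\partial/\partial f_{1},\dots,\partial/\partial f_{n}$ of $\c(f_{1},\dots,f_{n})$ extend uniquely to $\c$-derivations $D_{1},\dots,D_{n}$ of the finite separable extension $\c(X)$; by uniqueness each $D_{i}$ is $W$-invariant, and $[D_{i},D_{j}]=0$, $D_{i}(f_{j})=\delta_{ij}$. Writing the $D_{i}$ in terms of $\partial_{1},\dots,\partial_{n}$ by inverting the Jacobian $(\partial f_{j}/\partial t_{k})$ --- invertible over $\c(X)$ because the $f_{j}$ are algebraically independent --- shows $D_{i}\in L$. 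Hence $K_{0}:=\c(f_{1},\dots,f_{n})\langle D_{1},\dots,D_{n}\rangle$ is a sub-skew-field of $L^{W}$ isomorphic to $F_{n}$.

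It remains to prove $L^{W}=K_{0}$, and this is where I expect the main obstacle to lie. Since $\partial_{j}=\sum_{i}(\partial f_{i}/\partial t_{j})D_{i}$, one has $L=\Frac\big(\c(X)\langle D_{1},\dots,D_{n}\rangle\big)$; a primitive element $\theta$ for $\c(X)/\c(f_{1},\dots,f_{n})$ generates $L$ over $K_{0}$, giving $[L:K_{0}]_{\mathrm{left}}\le|W|$, while faithfulness of the $W$-action on the division ring $L$ yields $[L:L^{W}]_{\mathrm{left}}=|W|$ by the Galois theory of skew fields. Since $K_{0}\subseteq L^{W}\subseteq L$, multiplicativity of degrees forces $L^{W}=K_{0}\cong F_{n}$, which is the theorem. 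The delicate part is making these skew-field dimension counts rigorous --- in particular controlling automorphisms of $L$ that may act innerly and the left/right bookkeeping in the tower $K_{0}\subseteq L^{W}\subseteq L$ --- which is exactly the technical heart already needed for Theorem~\ref{main}; a minor additional point is recording uniformly that $\Frac(R^{W})$ is rational across the classical series, which follows from the standard structure of exponential invariants.
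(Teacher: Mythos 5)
Your argument is essentially correct, but it takes a genuinely different route from the paper for the noncommutative step, and it is not (as you suggest) the route taken in the proof of Theorem~\ref{main} either. The paper's proof of Theorem~\ref{main-2} runs as follows: it writes down the explicit action on $\D(X)=\tilde{A_1}^{\otimes n}$, computes the invariant ring $\Gamma^{W}$ directly (a polynomial ring in the $e_i(x_j-x_j^{-1})$ for $B_n$, and a principal localization of a polynomial ring for $D_n$ --- note that for the standard lattice $\mathbb{Z}^n$ the $D_n$-invariants are \emph{not} a polynomial ring, so your appeal to Steinberg's theorem for the simply connected group is slightly off, though harmless since you only use rationality of the fraction field), exhibits a $W$-stable principal open set $U=\Spec(\Gamma_\Delta)$ on which the action is free, applies the Cannings--Holland isomorphism $\D(U)^{W}\cong\D(U/W)$ (Theorem~\ref{cor1}), and then uses the localization identities of Proposition~\ref{prop1} to conclude $\D(\Gamma)^{W}_{\Delta}\cong\D(\Lambda)_{\Delta}$ (resp.\ $\D(\Gamma)_{P\Delta}$) before passing to skew fields of fractions. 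You replace the Cannings--Holland/free-locus step by constructing the $W$-invariant derivations $D_i$ dual to generators $f_i$ of $\Frac(\c[X])^{W}$, obtaining $K_0\cong F_n$ inside $L^{W}$, and then forcing $L^{W}=K_0$ by the degree count $[L:K_0]\le |W|=[L:L^{W}]$. This works, but the load-bearing input $[L:L^{W}]=|W|$ is Jacobson's outer Galois theory for division rings, and you should actually verify outerness rather than merely flag it: it holds here because $Z(F_n)=\c$ is algebraically closed, so any $u$ with $u^m\in\c^{*}$ is algebraic over the center and hence central, whence no nontrivial finite-order automorphism of $L\cong F_n$ is inner; one also needs the standard fact that the right $K_0$-span of $1,\theta,\dots,\theta^{|W|-1}$ is already a division ring to get $[L:K_0]\le|W|$. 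The trade-off is clear: your argument needs only the \emph{rationality} of $\Frac(\c[X])^{W}$ (a weaker commutative input than the explicit invariant rings of Propositions~\ref{proposition-action-in-odd-case} and \ref{proposition-action-in-even-case}) and no analysis of the free locus, at the price of importing the noncommutative Galois theory of skew fields; the paper's argument stays entirely within differential operators and localization but requires the finer structure of $\Gamma^{W}$ and the \'etale quotient.
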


As one of the applications of Theorem~\ref{main}, we will show that  an analogue  of the Gelfand-Kirillov conjecture for
Lie algebras holds for spherical subalgebras of rational Cherednik algebras $H_k:=H_k(W)$ associated
to $W$. We also discuss the Gelfand-Kirillov conjecture for a class of linear Galois algebras which includes the universal enveloping algebras of $gl_n$ and $sl_n$. 

\bigskip

\noindent{\bf Acknowledgements.} V.F. would like to thank the Mittag-Leffler Institute for its hospitality during his stay where part of this work was done.
F.E. is supported in part by 
Fapesp  ( 	
2013/22068-6), V.F. is
supported in part by  CNPq  (301320/2013-6) and by 
Fapesp  (2014/09310-5).

\section{Invariant differential operators}

Let $B$ be a commutative algebra. The ring of differential operators  $\D(B)$ is
defined to be $\D(B)=\cup_{n=0}^{\infty}\D(B)_n$, where $\D(B)_0=B$ and
$$ \D(B)_n \, = \, \{ \, d \in \End_{\c} (B) \, :\, d\, b - b\, d \in \D(B)_{n-1}\, \mbox{ for all }\, b \in B \} \, .$$

Let $B$ be a reduced, finitely generated algebra and let $G$ be a group
acting on $B$ by algebra automorphisms. Then $G$ acts on $\D(B)$,
via  $ (g \ast \partial) \cdot f = g \circ \partial \circ g^{-1} \cdot f$.
By \cite[Theorem 5]{L} restricting differential operators gives an 
injective homomorphism $ \D(B)^{G} \to \D(B^G)$. It is interesting
to know when this map is an isomorphism. The following
is a special case of \cite[Theorem 3.7]{CH}

\begin{theorem}
\la{cor1}
Let  $X$ be a normal, irreducible, affine algebraic variety, and let  
$G$ be a finite group acting freely on $X$. Then $\D(X)^G \cong \D(X/G)$.
\end{theorem}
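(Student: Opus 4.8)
The plan is to exploit that a free action of a finite group produces an étale quotient morphism, together with the good behaviour of rings of differential operators under étale base change. Since $G$ is finite and $X=\Spec B$ is affine with $B$ a finitely generated $\c$-algebra, the invariant ring $B^G$ is again finitely generated, so $Y:=X/G=\Spec B^G$ is an affine variety, the quotient map $\pi\colon X\to Y$ is finite, and normality of $X$ passes to $Y$. Because $G$ acts freely, $\pi$ is moreover a $G$-torsor, hence a finite étale Galois covering (we are in characteristic $0$); in particular $B$ is a finitely generated projective $B^G$-module and $\Omega_{B/B^G}=0$.

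By \cite[Theorem 5]{L}, restriction of differential operators gives an injective algebra homomorphism $\rho\colon \D(X)^G\to\D(Y)$: a $G$-invariant operator $D$ preserves $B^G=\O(Y)$ since $g(Df)=(g\ast D)(gf)=Df$ for $f\in\O(Y)$, and if $D\in\D(X)_n$ the iterated commutator $[[\cdots[D,f_0],\dots],f_n]$ vanishes on $\O(X)$, hence a fortiori for $f_i\in\O(Y)$, so $D|_{\O(Y)}\in\D(Y)_n$. It therefore suffices to construct a homomorphism $e\colon\D(Y)\to\D(X)^G$ with $\rho\circ e=\id_{\D(Y)}$. This $e$ is the extension-of-operators map attached to the étale morphism $\pi$: for $\pi$ étale the canonical map $B\otimes_{B^G}\D(Y)\to\D(X)$ is an isomorphism, equivalently every $D'\in\D(Y)_n$ extends uniquely to some $\widetilde{D'}\in\D(X)_n$ restricting to $D'$ on $\O(Y)$. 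Since $g\ast\widetilde{D'}$ is again such an extension for every $g\in G$ (as $G$ fixes $\O(Y)$ pointwise), uniqueness forces $g\ast\widetilde{D'}=\widetilde{D'}$, so $\widetilde{D'}\in\D(X)^G$; setting $e(D')=\widetilde{D'}$ yields an algebra homomorphism with $\rho(e(D'))=D'$. Hence $\rho$ is bijective and $\D(X)^G\cong\D(X/G)$.

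I expect the main obstacle to be the étale base-change statement $B\otimes_{B^G}\D(Y)\cong\D(X)$, equivalently the existence and uniqueness of extensions of higher-order differential operators along $\pi$. For derivations this is exactly the formal étaleness of $B/B^G$; for general order it is obtained by induction on the order (the map $[\widetilde{D'},-]$ is prescribed on $\O(Y)$ and extends by the inductive hypothesis, its values being controlled by the lower-order case), or by identifying $\D(Y)_n$ with $\O(Y)$-linear functionals on the sheaf $\mathcal P^n_Y$ of $n$-th order principal parts and invoking $\mathcal P^n_X\cong B\otimes_{B^G}\mathcal P^n_Y$, which holds precisely because $\pi$ is étale. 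A subtlety worth flagging is that $Y$ need not be smooth, so one cannot appeal to the more familiar behaviour of $\D(-)$ under smooth morphisms and must genuinely use étaleness of $\pi$; the normality hypothesis enters only through \cite[Theorem 5]{L}. As an alternative organization, one can record the isomorphism $\D(X)\cong B\otimes_{B^G}\D(Y)$ as $G$-equivariant with $G$ acting through the $B$-factor, and then pass to $G$-invariants using that the Reynolds operator $|G|^{-1}\sum_{g\in G}g$ splits $B^G\hookrightarrow B$ as $B^G$-modules, so that tensoring over $B^G$ with $\D(Y)$ preserves this splitting and $\D(X)^G\cong B^G\otimes_{B^G}\D(Y)=\D(Y)$.
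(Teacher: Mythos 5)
Your proof is correct, but it is not the paper's argument: the paper gives no proof of this statement at all, simply recording it as a special case of \cite[Theorem 3.7]{CH}, so your \'etale-descent argument is a genuinely independent, self-contained route. The substance of what you do --- freeness of a finite action in characteristic $0$ makes $\pi\colon X\to X/G$ a $G$-torsor, hence finite \'etale; then the base-change isomorphism $B\otimes_{B^G}\mathcal{P}^n_{Y}\cong\mathcal{P}^n_{X}$ for principal parts (valid for any formally \'etale extension, no smoothness of $Y$ required) yields existence and uniqueness of extensions of differential operators, so the restriction map $\D(X)^G\to\D(X/G)$ of \cite[Theorem 5]{L} has an inverse --- is sound, and your uniqueness mechanism in fact makes the appeal to \cite{L} dispensable: an operator of order $n$ killing $B^G$ is $B^G$-linear by induction on the order, hence lies in the relative ring $\D_{B/B^G}(B)=B$ because $\Omega_{B/B^G}=0$, hence is $0$. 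What the citation buys the authors is generality: the Cannings--Holland theorem applies under hypotheses weaker than freeness of the action (the paper itself only remarks that the quotient map is unramified in codimension one before invoking it in Section 3), whereas your argument needs the action to be free everywhere; that suffices for every use made in this paper, since $W$ acts freely on $V^{\mathrm{reg}}$ by Lemma \ref{lem-free}, and likewise $B_n$ and $D_n$ act freely on the open sets $U$ of Propositions \ref{proposition-action-in-odd-case} and \ref{proposition-action-in-even-case}. Two points you should tighten if this is to stand as a proof: justify that freeness on closed points gives a torsor in characteristic $0$ (e.g.\ via the completed-local-ring computation $\widehat{\mathcal{O}}_{Y,y}\cong\bigl(\prod_{x\mapsto y}\widehat{\mathcal{O}}_{X,x}\bigr)^{G}\cong\widehat{\mathcal{O}}_{X,x}$, which gives flatness and unramifiedness at once), and spell out the uniqueness-of-extension induction rather than leaving it as an aside, since it is exactly what forces the extension of an operator from $\D(X/G)$ to be $G$-invariant.
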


\section{Proof of Theorem \ref{main}.}
Let $V$ be a finite-dimensional vector space over $\c$. An element $s \in \GL(V)$
is a \textit{complex reflection} if it acts as identity on some hyperplane $H_s$ in $V$.
A finite subgroup $W$ of $\GL(V)$ is called a  complex reflection group if
it is generated by its complex reflections.  Let $(\cdot ,\cdot)$ be a positive definite Hermitian form on $V$, which is 
invariant under the  action of $W$. We may assume that 
$(\cdot, \cdot)$ is antilinear in the first argument and linear on its second argument:
if $x\in V $, we write $x^*$ for the linear form $V \to \c, \, v \mapsto (x,v)$.

Let $\mathcal{A}=\{H_s \}$ denote the set of reflection hyperplanes of $W$,
corresponding to $s\in W$. The group $W$ acts on $\mathcal{A}$ by permutations.
If $H \in \mathcal{A}$, the (pointwise) stabilizer of $H$ in $W$ is a cyclic subgroup 
$W_H$ of order $n_H$.
Let $\alpha_{H}$ be a linear form for which $H$ is the zero set. It is defined up to a constant.
We set 
$$ \delta \, := \, \prod_{H\in \mathcal{A}} \alpha_H   \quad , \quad   J\, := \, \prod_{H\in \mathcal{A}} \alpha_{H}^{n_H-1}\, .  $$ 
It is easy to show that $w.J = \det(w) J$ for any $w\in W$ (see   \cite[Exercise $4.3.5$]{Sp}). 
Let $N$ be the order of $W$. Then $\triangle :=J^N$ is an invariant polynomial.

We fix a basis $\{v_1,...,v_n\}$ of $V$ and  let $\{x_1,...,x_n\}$ be the corresponding dual basis of $V^*$.
Then $S:=\c[V]= \c[x_1,...,x_n]$. Let $S_\delta$ be the localization of $S$ by $\{1,\delta, \delta^2, ... \}$.
Then $S_\delta=S_J=S_\triangle$ since they are just localizations by $\{ \alpha_H^k\}_{ k\ge 0, H \in \mathcal{A}}.$ 
In fact,  $S_\delta \cong \c[V^{\rm{reg}}]$, where $ V^{\rm{reg}}:=V \setminus \bigcup_{H\in \mathcal{A}} H$.

\begin{lemma}\label{lem-free}
The action of $W$ restricts to a free action on $V^{\rm reg}$ and on  $\c[V^{\rm reg}]$.

\end{lemma}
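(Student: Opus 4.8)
The statement is essentially tautological given the definition of $V^{\mathrm{reg}}$, so the proof is short. Let me sketch the reasoning. The plan is to show that if $w \in W$ fixes a point $v \in V^{\mathrm{reg}}$, then $w$ acts as the identity on all of $V$, hence $w = e$ since $W \subseteq \GL(V)$ acts faithfully by definition. This immediately gives freeness of the $W$-action on $V^{\mathrm{reg}}$, and the induced action on the coordinate ring $\c[V^{\mathrm{reg}}]$ is free as well since a nontrivial automorphism of an integral domain arising from a nontrivial action on points of the (irreducible, reduced) variety $V^{\mathrm{reg}}$ cannot be the identity — equivalently, $\operatorname{Spec}$ is a faithful functor on this class of algebras.

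Here is how I would carry out the first (and only substantive) step. Suppose $w \in W$, $w \neq e$, and $wv = v$ for some $v \in V^{\mathrm{reg}}$. Since $W$ is a complex reflection group and $(\cdot,\cdot)$ is a $W$-invariant positive definite Hermitian form, $w$ is diagonalizable with eigenvalues that are roots of unity. The fixed subspace $V^w = \{u \in V : wu = u\}$ is then a proper subspace of $V$ (as $w \neq e$), and its orthogonal complement $(V^w)^\perp$ is nonzero and $w$-stable. I claim $v \notin V^{\mathrm{reg}}$, contradicting the hypothesis: it suffices to exhibit a reflection hyperplane $H \in \mathcal{A}$ with $v \in H$. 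For this, I would use the standard fact (Steinberg's theorem) that the pointwise stabilizer $W_v := \{g \in W : gv = v\}$ is itself a complex reflection group on $V$, generated by the reflections in $W$ fixing $v$; since $w \in W_v$ is nontrivial, $W_v$ contains at least one complex reflection $s$, and its reflection hyperplane $H_s$ satisfies $v \in H_s$ (because $s$ fixes $v$ and fixes $H_s$ pointwise, and $V^s = H_s$ as $s$ is a reflection — so $v \in V^s = H_s$). Thus $v \in \bigcup_{H \in \mathcal{A}} H$, so $v \notin V^{\mathrm{reg}}$, a contradiction. Hence the action on $V^{\mathrm{reg}}$ is free.

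For the action on $\c[V^{\mathrm{reg}}] \cong S_\delta$: if $w \in W$ acts as the identity ring automorphism on $\c[V^{\mathrm{reg}}]$, then it fixes every regular function, hence acts as the identity on the maximal spectrum $V^{\mathrm{reg}}$, hence (as just shown, taking any $v \in V^{\mathrm{reg}}$) $w = e$. So the $W$-action on $\c[V^{\mathrm{reg}}]$ is faithful; combined with freeness on points this is exactly the assertion that the action on $\c[V^{\mathrm{reg}}]$ is free in the relevant sense (distinct group elements induce distinct automorphisms, and no nontrivial element has a fixed point in $V^{\mathrm{reg}}$).

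The only point requiring genuine input rather than bookkeeping is the use of Steinberg's fixed-point theorem (that point stabilizers in a complex reflection group are generated by reflections); everything else is formal. If one prefers to avoid invoking Steinberg, one can argue more directly: $w \neq e$ diagonalizable with an eigenvalue $\zeta \neq 1$ means $\ker(w - \zeta \cdot \id) \neq 0$; but this does not by itself put $v$ on a reflection hyperplane, so Steinberg's theorem — or an equivalent statement that $W$ acts freely precisely on the complement of the reflection arrangement — really is the natural tool, and I would cite it (e.g. from Springer's book, already referenced in the paper as \cite{Sp}).
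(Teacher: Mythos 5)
Your argument is correct and is essentially the paper's own proof: both reduce freeness to Steinberg's fixed-point theorem, which forces the stabilizer of any point off the reflection hyperplanes to be trivial. The only piece you leave implicit is that $W$ actually preserves $V^{\mathrm{reg}}$ (so the action ``restricts''), but this is immediate since $W$ permutes the arrangement $\mathcal{A}$, so the gap is cosmetic.
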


\begin{proof}
Assume that for some $w\in W$ and $v\in V^{\rm reg}$, $wv$ belongs to a hyperplane fixed by some reflection $s$. Then 
$w^{-1}sw$ belongs to the isotropy group of $v$ which is also a reflection group by the Steinberg's theorem.  Since $v\in V^{\rm reg}$, we conclude that $s=id$, 
which is a contradiction. Hence, $wv\in V^{\rm reg}$ and this action is clearly free.
\end{proof}

If we let $X:=\Spec(S_\triangle)$ and $Y:=\Spec((S_\triangle)^W)$, then both $X$ and $Y$ are normal, irreducible, 
affine algebraic varieties. Since the action of $W$ on $X$ is free, the dominant morphism 
$\phi : X \to Y$ is unramified in codimension $1$.
So we can use Theorem~\ref{cor1} to get 
\begin{equation}
\la{DiffIden}
 \D(S_\triangle)^W \, \cong \D((S_\triangle)^W) \,  .
 \end{equation}

\begin{proposition}\label{prop1}
$(i)$ If $A$ is a domain and $M$ is an Ore subset then $\Frac(A_M)\cong \Frac(A)$.\\
$(ii)$  $(S_\triangle)^W \cong (S^W)_\triangle$.\\
$(iii)$ $\D(S_\triangle)^W \cong (\D(S)^W)_\triangle$.\\
$(iv)$ $\Frac(A_n)^W\simeq \Frac(A_n^W)$.\\
\end{proposition}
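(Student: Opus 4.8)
The four statements are of rather different flavors, so I would treat them in the order $(i)$, $(ii)$, $(iii)$, $(iv)$, using each to bootstrap the next. Part $(i)$ is the standard fact that localization at an Ore set does not change the skew field of fractions: one checks that $A \hookrightarrow A_M \hookrightarrow \Frac(A)$ and that every element of $A_M$ is already invertible in $\Frac(A)$, so the universal property of $\Frac(A)$ forces $\Frac(A_M) \cong \Frac(A)$. Here $M = \{1, \triangle, \triangle^2, \dots\}$ is a central (hence trivially Ore) multiplicative set in each of the rings that occur below, because $\triangle = J^N$ is an invariant polynomial and therefore lies in the center of $\D(S)^W$ and commutes with everything in $S$.

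For $(ii)$ I would argue that localization commutes with taking $W$-invariants whenever the multiplicative set is itself $W$-stable and consists of invariant (or at least semi-invariant) elements. Concretely, $(S^W)_\triangle \subseteq (S_\triangle)^W$ is clear since $\triangle \in S^W$; conversely, an element of $(S_\triangle)^W$ can be written $f/\triangle^k$ with $f \in S$, and averaging over $W$ (or using that $\triangle^k$ is invariant) shows $f$ may be taken in $S^W$. The same bookkeeping, applied one filtration degree at a time to the definition $\D(B) = \bigcup_n \D(B)_n$, gives $(iii)$: a $W$-invariant differential operator on $S_\triangle$ has the form $\triangle^{-k}\partial$ with $\partial \in \D(S)$, and invariance of the operator together with centrality of $\triangle^k$ lets one replace $\partial$ by its $W$-average in $\D(S)^W$; the reverse inclusion is immediate. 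One must be a little careful that the $W$-action on $\D(S_\triangle)$ is the one obtained by localizing the action on $\D(S)$, but this is exactly how the action was set up.

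Finally, part $(iv)$ is the one the whole proposition is aimed at, and it is where the real content lies. Assembling the pieces: $\Frac(A_n^W) = \Frac(\D(S)^W) \cong \Frac((\D(S)^W)_\triangle)$ by $(i)$, which equals $\Frac(\D(S_\triangle)^W)$ by $(iii)$, which is $\Frac(\D((S_\triangle)^W))$ by the isomorphism \eqref{DiffIden} coming from Theorem~\ref{cor1}. On the other side, $\Frac(A_n^W) \subseteq \Frac(A_n)^W$ always, and to get equality I would again use the centrality of $\triangle$: any element of $\Frac(A_n)^W$ can be cleared of denominators against a power of $\triangle$ to land in $(A_n)_\triangle = \D(S_\triangle)$, and invariance then places it in $\D(S_\triangle)^W$, whose fraction field we have just identified. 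Tracking through, $\Frac(A_n)^W \cong \Frac(\D(S_\triangle)^W) \cong \Frac(A_n^W)$.

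The main obstacle, to my mind, is not any single step but making the "average over $W$ and clear denominators by $\triangle$" maneuver precise and uniform across the commutative ring, the ring of differential operators, and the skew field — in particular verifying that $\triangle$ (equivalently, a suitable power of $J$) really is central in $\D(S)^W$ and not merely $W$-semi-invariant, since that centrality is what makes the Ore condition automatic and what lets denominators be pulled through the $W$-action. Once that is nailed down, parts $(i)$–$(iii)$ are formal, and $(iv)$ is just the concatenation of the isomorphisms above with \eqref{DiffIden}.
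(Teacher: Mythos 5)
Your treatment of $(i)$--$(iii)$ is essentially the paper's: one clears denominators against the invariant element $\triangle$ and uses $\D(S_M)\cong \D(S)_M$ for a multiplicative set $M\subset S$ (the paper cites \cite[Theorem 15.1.25]{MR} for this). But two claims you lean on are false, and the second leaves a genuine gap in $(iv)$. First, $\triangle$ is \emph{not} central in $\D(S)$ or in $\D(S)^W$: it is a nonconstant polynomial, so $[\partial_i,\triangle]=\partial_i(\triangle)\neq 0$ in general. The set $\{\triangle^k\}$ is nevertheless an Ore set in $\D(S)$, but for a different reason --- $\mathrm{ad}(\triangle)$ acts locally nilpotently on each filtration piece $\D(S)_m$ --- which is exactly what the cited result of McConnell--Robson encodes. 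Your stated "main obstacle", namely verifying that $\triangle$ is central so that the Ore condition is automatic, cannot be nailed down because the centrality is simply not true.

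Second, and more seriously, your argument for $(iv)$ asserts that any element of $\Frac(A_n)^W$ "can be cleared of denominators against a power of $\triangle$ to land in $(A_n)_\triangle$". This fails: an element of $\Frac(A_n)$ is a fraction $ab^{-1}$ with $b$ an arbitrary nonzero differential operator, and such a $b$ cannot in general be absorbed into a power of the polynomial $\triangle$; the ring $(A_n)_\triangle=\D(S_\triangle)$ is a proper subring of $\Frac(A_n)$, very far from all of it. So your chain of isomorphisms correctly identifies $\Frac(A_n^W)$ with $\Frac\bigl(\D((S_\triangle)^W)\bigr)$, but it never reaches $\Frac(A_n)^W$, which is what $(iv)$ is about. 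The paper's route for $(iv)$ is different and does not pass through $\triangle$ at all: it invokes Faith's theorem \cite{Fa} that Galois subrings of Ore domains are Ore domains, which yields that $A_n^W$ is Ore and $\Frac(A_n^W)\simeq \Frac(A_n)^W$; the mechanism there is the Ore condition in $A_n$ itself together with averaging over the finite group $W$ to manufacture an invariant common denominator. You would need to cite such a result or reproduce its proof; the $\triangle$-localization machinery of $(i)$--$(iii)$ does not substitute for it.
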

\begin{proof}
$(i)$ This  statement is clear.\\
$(ii)$ Since $\triangle$ is an invariant polynomial then $f\in (S_\triangle)^W$ iff $\triangle^k f \in S^W$
for some $k\ge 0$ iff $f \in (S^W)_\triangle$.\\
$(iii)$ Note that $\D(S_M)\cong D(S)_M$ for a multiplicative set $M$, \cite[Theorem 15.1.25]{MR}.
If $d \in \D(S_\triangle)^W$ then $\triangle ^k d \in \D(S)^W$ for some $k\ge 0$.  Finally, (iv) follows from \cite{Fa}, Theorem 1, see also \cite{D}.
\end{proof}
 
Now  $(S_\triangle)^W \cong (S^W)_\triangle \cong S_{\triangle}$ , where the first identity holds by part $(ii)$
while the second one follows from the Chevalley-Shephard-Todd theorem.  Therefore, the right hand side of 
\eqref{DiffIden} is isomorphic to  $\D(S_\triangle)\cong \D(S)_\triangle $. Thus, using part $(ii)$ we have
$$ ( \D(S)^W)_\triangle \, \cong \, \D(S)_\triangle  \, .$$
Finally, taking the skew field of  fractions on both sides, we obtain
$$F_n^W \cong F_n \, .$$
\section{Gelfand-Kirillov conjecture for rational Cherednik algebras}
Let us first recall the definition of rational Cherednik algebras.
As before $W$ is  a finite complex reflection subgroup of $\GL(V)$
and $(\cdot, \cdot)$ is a $W$-invariant positive definite Hermitian form.
For $H\in \mathcal{A}$, let $v_H \in V $ be such that $\alpha_H=v_H^*$.
Next for each  $H$  from  $\mathcal{A}$, we set
$$ e_{H,i}\, := \, \frac{1}{n_H} \, \sum_{w\in W_H} \,(\det w )^{-i} w \, .$$ 
Since $W_H$ is a cyclic group of order $n_H$, this is a complete set of orthogonal idempotents in $\c W_H$. 
Now, for $H\in \mathcal{A}$, we fix  a sequence of non-negative
integers $k_H=\{k_{H,i}\}_{i=0}^{n_H-1}$ so that $k_H=k_{H'}$ if $H$ and
$H$ are on same orbit of $W$ on $\mathcal{A}$.
%

The \textit{rational Cherednik algebra} $H_k=H_k(W)$ is generated by elements
$x\in V^*,  \xi \in V $ and $w \in W$ subject to the following relations
\begin{eqnarray}
&& [x,x'] = 0 \, , \quad [ \xi,\xi']=0\, , \quad w\, x \, w^{-1} =w(x)\, , \quad w\, \xi  \, w^{-1} = w(\xi)\,  , \nonumber \\
&& [\xi,x] = \langle \xi , x \rangle + \sum_{H\in \mathcal{A}} \frac{\langle \alpha_H,  \xi \rangle\, 
\langle x, v_H \rangle  }{\langle \alpha_H , v_H \rangle } \sum_{i=0}^{n_H-1} n_H ( k_{H,i}-k_{H,i+1}) e_{H,i}\, . \nonumber 
\end{eqnarray}

Next, we introduce the \textit{spherical subalgebra} $U_k(W)$ of $H_k$: by definition, $U_k(W):=eH_ke$, where 
$e:=|W|^{-1} \sum_{w \in W} w$ is the symmetrizing idempotent in $\c W  \subset H_k$.
The skew field of fractions of the spherical subalgebra was studied by  Etingof and Ginzburg  \cite[Theorem $17.7^*$]{EG}. 
Combining this result with Theorem~\ref{main}, we get the following analogue of the Gelfand-Kirillov conjecture for rational Cherednik algebras:
\begin{theorem}
For a complex reflection group $W$ we have $\Frac(U_k (W)) \cong F_n$.
\end{theorem}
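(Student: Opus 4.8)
The plan is to reduce the statement to Theorem~\ref{main} via the localization result of Etingof and Ginzburg. First I would recall that \cite[Theorem $17.7^*$]{EG} provides, for a suitable invariant element $\triangle$ (a power of the discriminant, which is exactly the invariant polynomial $J^N$ introduced in Section~3), an isomorphism between the localization $(U_k(W))_{\triangle}$ of the spherical subalgebra and the localization $\D(S^W)_{\triangle} \cong (\D(S)^W)_{\triangle}$ of the invariant differential operators. Concretely, away from the reflection hyperplanes the Dunkl operators become, after conjugation by $\delta^{k}$-type factors, ordinary vector fields, so the Cherednik algebra degenerates to $\D(V^{\mathrm{reg}})\rtimes W$ and taking the spherical subalgebra yields $\D(V^{\mathrm{reg}})^W \cong \D(V^{\mathrm{reg}}/W)$ by Lemma~\ref{lem-free} and Theorem~\ref{cor1}.

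The second step is purely formal: by Proposition~\ref{prop1}$(i)$, passing to a localization at an Ore set does not change the skew field of fractions of a domain. Since $U_k(W)$ is a Noetherian domain and $\triangle$ is a normal (indeed central up to the $W$-action, invariant) regular element, $\{1,\triangle,\triangle^2,\dots\}$ is an Ore set, so $\Frac(U_k(W)) \cong \Frac\big((U_k(W))_{\triangle}\big)$. Applying the Etingof--Ginzburg isomorphism to the right-hand side, this equals $\Frac\big((\D(S)^W)_{\triangle}\big)$. But in the proof of Theorem~\ref{main} we already established $(\D(S)^W)_{\triangle} \cong \D(S)_{\triangle}$, whose skew field of fractions is $\Frac(\D(S)_{\triangle}) \cong \Frac(\D(S)) = \Frac(A_n) = F_n$, again by Proposition~\ref{prop1}$(i)$. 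Chaining these isomorphisms gives $\Frac(U_k(W)) \cong F_n$.

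I expect the only genuinely delicate point to be the precise citation and bookkeeping around \cite[Theorem $17.7^*$]{EG}: one must check that the invariant element at which Etingof--Ginzburg localize can be taken to be (a power of) our $\triangle = J^N$, and that the resulting isomorphism of localized algebras is one of $\c$-algebras so that it really does induce an isomorphism of skew fields. Once that identification is in place, everything else is a routine concatenation of the Ore-localization lemma with the chain of isomorphisms already assembled in the proof of Theorem~\ref{main}; no new geometric input beyond Lemma~\ref{lem-free} and Theorem~\ref{cor1} is needed.
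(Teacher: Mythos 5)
Your proposal is correct and follows essentially the same route as the paper: the paper simply invokes \cite[Theorem $17.7^*$]{EG} to identify $\Frac(U_k(W))$ with $\Frac(\D(S)^W)=F_n^W$ and then concludes by Theorem~\ref{main}. You have merely unpacked the Etingof--Ginzburg step as a localization statement at $\triangle$ and made the Ore-localization bookkeeping explicit, which is consistent with (and somewhat more detailed than) the argument in the text.
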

%

\section{Noether's problem for $n$-dimensional torus}

Let $X =\mathbb{T}_n= \mathrm{Spec} \,( k[x_1^{\pm 1}, \ldots, x_n^{\pm 1}])$ be 
the $n$-dimensional torus. Then  $\D(X)\simeq \Tilde{A_{n}}$ where
 $\tilde{A_{n}}$ is the localization of $A_{n}$ by the multiplicative set generated 
 by $\{x_i | i=1, \ldots, n\}$.  We consider the action of classical reflection groups on $\D(X)$.

For each $j=1,\ldots, n$ consider the  involutions $\tau_j^{\pm}$ on $\c[x_1^{\pm1}, \ldots, x_n^{\pm1}]$ such that
$$\tau_j^{\pm}(x_j) = \pm x_j^{-1}    \, \mbox{ and } \, \tau_j^{\pm}(x_i)=x_i  \, \mbox { for } \, i\neq j \, .$$ They
induce the involutions $\varepsilon_{n,j}^{\pm}$ on $\tilde{A_{n}}$ such that $\varepsilon_{n,j}^{\pm}(x_i)=\tau_j^{\pm}(x_i)$,
$$\varepsilon_{n,j}^{\pm}(\partial_j)=\mp x_j^{2}\partial_j  \, \mbox{ and } 
\, \varepsilon_{n,j}(\partial_i)=\partial_i \, \mbox{  if }  i\neq j \, ,$$
$i,j=1, \ldots, n$. 
We show it for $n=1$. 
Suppose we have a group action $G\times \mathbb{T}^1\rightarrow \mathbb{T}^1$ on one dimensional torus. Then the induced action on $\tilde{A}_{1}$ is given as
$$x \stackrel{g}\longmapsto x^{g}, \partial\stackrel{g}\longmapsto \partial^{g}= g\partial g^{-1}.$$
Hence, if  $\tau^{-}$ sends $x$ to $-x^{-1}$  then we have 

$$\varepsilon^{-}(\partial)(x^r)=\partial^{\tau^{-}}(x^r)= \tau^{-}\partial \tau^{-}(x^r)=(-1)^r \tau^{-}\partial(x^{-r})=(-1)^{r+1} r \tau^{-}(x^{-(r+1)})=$$
$=rx^{r+1},$ and  $\varepsilon^{-}(x)(x^r)=(-1)^r \tau^{-}(x^{-r+1})=-x^{r-1}.$\\

We obtain $\varepsilon^{-}(\partial)=x^{2}\partial$, $\varepsilon^{-}(x)=-x^{-1}$. This is easily generalized to $n$-dimensional torus $\mathbb{T}^n$ and  $\tau_i^{\pm}$, $i=1, \ldots, n$.

We will consider  the action of the reflection group of type $B_n$ ($n\geq 2$)  and the reflection group of type $D_n$ ($n\geq 4$) on $X$.  
 We recall, the group $B_n$ is the semi-direct product of the symmetric group $S_{n}$ and  
 $(\mathbb{Z}/2\mathbb{Z})^n$.  
There is a natural action of $B_n$
on $\D(X)=\tilde{A_{1}}^{\otimes n}$,  where $S_n$ acts by  permutations 
 and $(\mathbb{Z}/2\mathbb{Z})^n$  acts by $\varepsilon_{n,i}^{-}$, $i=1, \ldots, n$.




We have

\begin{proposition}
\label{proposition-action-in-odd-case}
%
(i)\,  The subalgebra of $B_n$-invariants of $\D(X)$  is a polynomial algebra in 
$$s_{i}=e_{i}(x_{1}-x_{1}^{-1},\dots,x_{n}-x_{n}^{-1}), \, i=1, \dots, n,$$ 
where $e_i$ is the $i$-th elementary symmetric polynomial.
In particular, $X/B_n$ is  $n$-dimensional affine space.

%
(ii) \, Let $Z\subset \mathbb{T}^{n}$ be the subvariety defined by the following equation 
$$\prod_{1\leq i\leq j\leq n} (x_{i}^{2}-\frac{1}{x_{j}^{2}})\prod_{1\leq i<j\leq n}(x_{i}^{2}-x_{j}^{2})=0$$ 
and $U=\mathbb{T}^{n}\setminus Z$. Then $U$ is an affine $B_n$-invariant subvariety of $X$ and the action of $B_n$ on $U$ is free. In particular, the projection $\pi:U\mapsto U/B_n$ is etale.
%
\end{proposition}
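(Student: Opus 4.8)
The plan is to use the decomposition $B_n = S_n \ltimes (\Z/2\Z)^n$ and to pass first to the $(\Z/2\Z)^n$-invariants, which are already easy to describe. For part (i) I first note that the $s_i$ lie in $\c[X] = \c[x_1^{\pm1},\dots,x_n^{\pm1}]$, so the assertion is that $\c[X]^{B_n} = \c[s_1,\dots,s_n]$. To compute the $(\Z/2\Z)^n$-invariants, work one variable at a time: put $u := x - x^{-1}$; the relation $x^2 = u\,x + 1$ shows that $\c[x^{\pm 1}] = \c[u] \oplus \c[u]\,x$ is free of rank two over $\c[u]$, and the generating involution $\tau^-$ fixes $u$ and sends $x \mapsto -x^{-1} = u - x$, so comparing $x$-coefficients (using $\tfrac12 \in \c$) gives $\c[x^{\pm1}]^{\langle\tau^-\rangle} = \c[u]$. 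Taking the $n$-fold tensor product, $\c[X]^{(\Z/2\Z)^n} = \c[u_1,\dots,u_n]$ with $u_i := x_i - x_i^{-1}$, a polynomial ring on which the residual $S_n$ acts by permuting the $u_i$. By the fundamental theorem on symmetric polynomials, $\c[X]^{B_n} = \c[u_1,\dots,u_n]^{S_n} = \c[e_1(u),\dots,e_n(u)] = \c[s_1,\dots,s_n]$, so $X/B_n = \Spec\c[s_1,\dots,s_n]$ is affine $n$-space.

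For part (ii), let $g \in \c[x_1^{\pm1},\dots,x_n^{\pm1}]$ be the (nonzero) Laurent polynomial defining $Z$. Since $X$ is affine, $U = X \setminus V(g) = \Spec\big(\c[x_1^{\pm1},\dots,x_n^{\pm1}][g^{-1}]\big)$ is affine. For $B_n$-invariance it suffices to show that each generator of $B_n$ carries $g$ to a unit multiple of itself in $\c[x_i^{\pm1}]$, for then $V(g)$ is stable. Transpositions permute the factors of $g$ up to sign. For $\tau_k^-$ one uses $\tau_k^-(x_k^2) = x_k^{-2}$: it interchanges the factors $(x_i^2 - x_k^2) \leftrightarrow (x_i^2 - x_k^{-2})$ exactly for $i<k$, interchanges $(x_k^2 - x_j^2) \leftrightarrow (x_k^2 - x_j^{-2})$ up to units for $k<j$ (via identities such as $x_k^{-2} - x_j^2 = -x_k^{-2}x_j^2\,(x_k^2 - x_j^{-2})$), fixes the diagonal factor $(x_k^2 - x_k^{-2})$ up to the sign $-1$, and leaves all other factors alone; hence $\tau_k^-\cdot g$ is a unit multiple of $g$. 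Thus $Z$, and so $U$, is $B_n$-invariant.

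Finally, freeness: let $w \in B_n \setminus \{1\}$ and $p = (p_1,\dots,p_n) \in X$ with $w(p) = p$; I claim $g(p) = 0$. Write $w = \sigma\rho$ with $\sigma \in S_n$ and $\rho = \prod_{k \in J}\tau_k^-$; then for each $i$ the $i$-th coordinate of $w(p)$ is $p_{\sigma^{-1}(i)}$ or $-p_{\sigma^{-1}(i)}^{-1}$. If $\sigma \neq 1$, pick $i$ with $b := \sigma^{-1}(i) \neq i$; the $i$-th component of $w(p)=p$ gives $p_i = p_b$ or $p_i = -p_b^{-1}$, so at $p$ either $x_i^2 = x_b^2$ or $x_i^2 x_b^2 = 1$, and writing $a = \min(i,b)$, $c = \max(i,b)$, the factor $x_a^2 - x_c^2$ respectively $x_a^2 - x_c^{-2}$ of $g$ vanishes at $p$. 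If $\sigma = 1$ then $w = \rho$ with $J \neq \emptyset$; for $k \in J$ the $k$-th component gives $p_k = -p_k^{-1}$, i.e. $x_k^2 = -1$ at $p$, whence the $i=j=k$ factor $x_k^2 - x_k^{-2}$ of $g$ vanishes. So no nonidentity element of $B_n$ fixes a point of $U$; the action is free. Since $B_n$ is finite and $U$ affine, $U/B_n = \Spec(\c[U]^{B_n})$ exists and $U \to U/B_n$ is finite, and a free action over $\c$ makes it a $B_n$-torsor, in particular flat and unramified, hence étale. The only real work here is the bookkeeping in part (ii): showing that this specific $Z$ is at once $B_n$-invariant and contains every fixed locus. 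The point to watch is the twist $\tau_k^-(x_k)=-x_k^{-1}$ (not $x_k\mapsto x_k^{-1}$): it forces the fixed locus $\{x_k^2=-1\}$, hence the diagonal factors $x_i^2-x_i^{-2}$ in $g$, and it is what makes the invariance check require the unit-factor identities above rather than being immediate; part (i) and the affineness statement are routine.
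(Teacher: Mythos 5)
Your proof is correct, and it differs from the paper's in an interesting way; it is also more complete where the paper is terse. (Both you and the paper tacitly read the ``$B_n$-invariants of $\D(X)$'' in the statement as invariants of the coordinate ring $\c[X]=\c[x_1^{\pm1},\dots,x_n^{\pm1}]$, which is clearly what is meant.) For (i) the paper argues directly with a lexicographic order on Laurent monomials: the (signed) orbit sums $m_{\pi}$ indexed by dominant exponents $k_1\geq\dots\geq k_n\geq 0$ form a basis of the invariants, $M_{\pi}=s_1^{k_1-k_2}\cdots s_{n-1}^{k_{n-1}-k_n}s_n^{k_n}$ has the same leading monomial as $m_{\pi}$, and induction on the leading term finishes -- the classical proof of the fundamental theorem on symmetric functions transplanted to $B_n$-invariant Laurent polynomials. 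You instead use the normal subgroup $(\Z/2\Z)^n\trianglelefteq B_n$: the one-variable splitting $\c[x^{\pm1}]=\c[u]\oplus\c[u]x$ with $u=x-x^{-1}$ gives $\c[X]^{(\Z/2\Z)^n}=\c[u_1,\dots,u_n]$, and the ordinary fundamental theorem for the residual $S_n$-action then yields $\c[s_1,\dots,s_n]$ at once. Your route is more structural and makes polynomiality (hence $X/B_n\cong\A^n$) transparent without monomial bookkeeping; the paper's leading-term method has the advantage that it carries over almost verbatim to the $D_n$ case of the next proposition, where the invariant ring is no longer polynomial and your group-splitting trick is less clean. For (ii) the paper only exhibits the $B_n$-invariant Laurent polynomial $\Delta$ (the defining equation of $Z$ padded with unit multiples of its own factors) and asserts $U=X\setminus V(\Delta)$, leaving the freeness and \'etaleness claims essentially unargued; you supply exactly the missing pieces: the unit-factor identities showing each generator of $B_n$ sends the defining polynomial to a unit multiple of itself, and the fixed-point analysis showing every nontrivial element's fixed locus lies in $Z$ -- in particular the locus $x_k^2=-1$ forced by $\tau_k^-(x_k)=-x_k^{-1}$, which is precisely why the diagonal factors $x_k^2-x_k^{-2}$ must be present. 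So for (ii) your write-up is a genuine completion of the paper's sketch rather than a variant of it.
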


\begin{proof}
\label{proof-of-the-proposition-action-in-odd-case}
For $(i)$,  consider the lexicographical order on Laurent monomials.
Let $\pi=(k_{1},\dots,k_{n})$ be a sequence of integers with the property 
$k_{1}\geq k_{2}\geq\dots\geq k_{n}\geq 0$
and  $x_{1}^{k_{1}}\dots x_{n}^{k_{n}}$  the corresponding  monomial.   
Denote by $m_{\pi}=x_{1}^{k_{1}}\dots x_{n}^{k_{n}}+\dots$ a $B_n$-invariant 
polynomial with a minimal number of monomials. We will call $\pi$ the degree 
of $m_{\pi}$. The polynomials $m_{\pi}$ form a basis of the subalgebra of  
$B_n$-invariants. The leading monomial  $x_{1}^{k_{1}}\dots x_{n}^{k_{n}}$ 
of $m_{\pi}$  coincides with the leading monomial  of 
$M_{\pi}:=s_{1}^{k_{1}-k_{2}}\dots s_{n-1}^{k_{n-1}-k_{n}}s_{n}^{k_{n}}$. 
Then $m_{\pi}-M_{\pi}$ has a smaller leading monomial and we can proceed 
by induction on the degree. 

For $(ii)$ we denote 
$$\Delta=\prod_{1\leq i, j\leq n} \bigg(x_{i}^{2}-\frac{1}{x_{j}^{2}}\bigg)\, \prod_{1\leq i<j\leq n}(x_{i}^{2}-x_{j}^{2})\,
 \bigg(\frac{1}{x_{i}^{2}}-\frac{1}{x_{j}^{2}}\bigg)\, \prod_{i=1}^{ n}
 \bigg(x_{i}^{2}-\frac{1}{x_{i}^{2}}\bigg).$$ 
Then one can easily that $\Delta$ is $B_n$-invariant and $U=X\setminus V(\Delta)$, where $V(\Delta)$ is 
the algebraic subset of $\mathbb{T}^n$ corresponding to $\Delta$.
\end{proof}

 The  group $D_n$  is generated by $S_{n}$ and  $(\mathbb{Z}/2\mathbb{Z})^{n-1}$ which consists  
   of the transformations  $(\varepsilon_{1}^{d_{1}},\dots, \varepsilon_{n}^{d_{n}})\in 
   (\mathbb{Z}/2\mathbb{Z})^{n}$, $d_{i}=0,1,i=1,\dots, n$, such that $d_{1}+\dots+d_{n}$ is even. 
Consider now the action of
 $D_n$
on $\D(X)=\tilde{A_{1}}^{\otimes n}$  where $S_n$ acts by natural permutations and
$\varepsilon_i$ acts as
$$\id_{\tA_{1}}^{i-1}\otimes \varepsilon_{n,i}^{+}\otimes \varepsilon_{n,i+1}^{+}\otimes \id_{\tA_{1}}^{n-i-1}, i=1,\dots, n-1.
$$

\begin{proposition}
\label{proposition-action-in-even-case}
(i) The subalgebra of $D_n$-invariants of $\D(X)$ is generated by $$s_{i}=e_{i}(x_{1}+x_{1}^{-1},\dots,x_{n}+x_{n}^{-1}),\, i=1,\dots,n-1$$ and
$$ \Delta_{n}^{\pm}=\frac{1}{2}\bigg(\, \prod_{i=1}^{n}(x_{i}+\dfrac{1}{x_{i}})
\pm
\prod_{i=1}^{n}(x_{i}-\frac{1}{x_{i}})\,\bigg).$$
Moreover,  $\Delta_{n}^{-}\in \c[s_{1},\dots,s_{p-1},\Delta^{+}]_P$, where $P$ is  some polynomial in \\
$\c[s_{1},\dots,s_{n-1},\Delta_{n}^{+}]$.  In particular,  $X/D_n$ is isomorphic to a principal open subset of $n$-dimensional affine space.

(ii) Let $Z\subset \mathbb{T}^n$ be the variety defined by equation 
$\Delta=0$ 
and $U=X\setminus Z$. Then $U$ is an affine $D_n$-invariant subvariety of $X$ and the action of $D_n$ on $U$ is free. In particular, the projection $\pi:U \rightarrow U/D_n$ is etale.
\end{proposition}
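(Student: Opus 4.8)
The proposition has two parts, closely parallel to Proposition~\ref{proposition-action-in-odd-case}. The plan is to handle them in the order (i) then (ii), reusing the leading-monomial argument from the $B_n$ case for part (i) and an explicit $D_n$-invariant "discriminant" for part (ii).

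For part (i), I would again introduce the lexicographic order on Laurent monomials in $x_1,\dots,x_n$. The key difference from the $B_n$ case is that the sign changes $\varepsilon_i$ now act through $\varepsilon_{n,i}^{+}$ (sending $x_i \mapsto x_i^{-1}$, not $-x_i^{-1}$), and only even numbers of them are available; this is exactly why the elementary symmetric polynomials are evaluated at $x_i+x_i^{-1}$ rather than $x_i-x_i^{-1}$, and why one extra "half-sum" generator $\Delta_n^{\pm}$ is needed to detect the sign of a single flip. I would argue that the $D_n$-invariant Laurent polynomials of a given leading exponent $\pi=(k_1\ge\cdots\ge k_n\ge 0)$ with minimal number of monomials, call them $m_\pi$, form a basis, and that the leading monomial of $m_\pi$ matches that of $s_1^{k_1-k_2}\cdots s_{n-1}^{k_{n-1}-k_n}(\Delta_n^{+})^{k_n}$ when the "reflection part" acts trivially, the correction $(\Delta_n^{+})^{k_n}$ accounting for the $S_n$-symmetric product $\prod(x_i+x_i^{-1})^{k_n}$ up to lower terms; a separate short computation shows $\Delta_n^{+}\Delta_n^{-}$ and $(\Delta_n^{\pm})^2$ lie in $\c[s_1,\dots,s_{n-1}]$, which gives the stated relation expressing $\Delta_n^{-}$ as a polynomial in the $s_i$ and $\Delta_n^{+}$ divided by some $P\in\c[s_1,\dots,s_{n-1},\Delta_n^{+}]$. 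Induction on the degree $\pi$ then finishes, and the identification of $X/D_n$ with a principal open in affine space follows since inverting $P$ (equivalently $\Delta_n^{+}$ or the image of $\triangle$) makes the ring of invariants polynomial.

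For part (ii), I would exhibit the explicit $D_n$-invariant polynomial $\Delta$ written at the end of the proof of Proposition~\ref{proposition-action-in-odd-case}, check directly that each generator of $D_n$ — a permutation in $S_n$ or an even sign change through the $\varepsilon_{n,i}^{+}$ — permutes the factors $x_i^2-x_j^{-2}$, $x_i^2-x_j^2$, $x_i^{-2}-x_j^{-2}$ and $x_i^2-x_i^{-2}$ among themselves (possibly up to units in the Laurent ring, which is all that matters for the vanishing locus), so that $V(\Delta)$ is $D_n$-stable and $U=X\setminus V(\Delta)$ is affine and $D_n$-invariant. Then I would verify freeness: if $w\cdot v$ and $v$ both lie in $U$ for $w\neq e$, then either $w$ moves two coordinates that are forced equal or negatives of each other, contradicting the factors $x_i^2-x_j^2=0$, resp. $x_i^2-x_j^{-2}=0$ not holding on $U$, or a nontrivial sign change fixes $v$, forcing some $x_i^2=x_i^{-2}$, again excluded on $U$. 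A small case check (mirroring the Steinberg-type argument of Lemma~\ref{lem-free}) confirms that these exhaust the ways an element of $D_n$ can fix a point. Freeness of a finite group action on a normal affine variety gives that $\pi:U\to U/D_n$ is étale.

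The main obstacle I expect is part (i), specifically the bookkeeping of the single generator $\Delta_n^{\pm}$: unlike the $B_n$ case, $D_n$ is an index-two subgroup, so the invariant ring is not polynomial, and one must pin down precisely the algebraic relation $(\Delta_n^{+})^2,\,(\Delta_n^{-})^2,\,\Delta_n^{+}\Delta_n^{-}\in\c[s_1,\dots,s_{n-1}]$ and argue that $\{m_\pi\}$ together with the parity twist by $\Delta_n^{-}$ really spans. The freeness in part (ii) is routine once the correct $\Delta$ is in hand; the only care needed there is that we work in the Laurent ring, so "divides" should be read up to the units $x_i^{\pm 1}$, which does not affect the zero set in $\mathbb{T}^n$.
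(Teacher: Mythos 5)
Your part (ii) and the general leading-monomial induction in part (i) follow the same route as the paper, but there is a concrete error at the heart of your part (i): the claim that $\Delta_n^{+}\Delta_n^{-}$ and $(\Delta_n^{\pm})^2$ lie in $\c[s_1,\dots,s_{n-1}]$ is false. Already for $n=2$ (as a purely algebraic check) one has $\Delta_2^{+}=x_1x_2+x_1^{-1}x_2^{-1}$, $\Delta_2^{-}=x_1x_2^{-1}+x_1^{-1}x_2$, and $\Delta_2^{+}\Delta_2^{-}=x_1^2+x_1^{-2}+x_2^2+x_2^{-2}=s_1^2-2s_2-4$, which genuinely involves $s_n$; likewise $(\Delta_2^{+})^2=x_1^2x_2^2+2+x_1^{-2}x_2^{-2}$ cannot be a polynomial in $s_1$ for leading-monomial reasons. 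The correct statement, and the one the paper's proof turns on, is that $D:=\Delta_n^{+}\Delta_n^{-}$ is $B_n$-invariant, hence a polynomial in $s_1,\dots,s_n$ where $s_n=\Delta_n^{+}+\Delta_n^{-}$, that $s_n$ occurs in $D$ at most linearly (the leading exponent of $D$ is $(2,\dots,2,0)$ while that of $s_n^2$ is $(2,\dots,2,2)$), and that its coefficient $p_0$ is nonzero, i.e.\ $D\notin\c[s_1,\dots,s_{n-1}]$. Writing $D=p_1+s_np_0$ gives $\Delta_n^{-}(\Delta_n^{+}-p_0)=\Delta_n^{+}p_0+p_1$, which is exactly the asserted relation with $P=\Delta_n^{+}-p_0$. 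Under your premise you would instead conclude $P=\Delta_n^{+}$, and the step you call a ``separate short computation'' cannot be carried out as stated.

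A second, smaller gap is in your indexing of the basis: you take leading exponents $k_1\ge\cdots\ge k_n\ge 0$, which is the $B_n$ bookkeeping. Since $D_n$ has index two in $B_n$ and a single sign flip is not available, $x_1^{k_1}\cdots x_n^{k_n}$ and $x_1^{k_1}\cdots x_n^{-k_n}$ lie in distinct $D_n$-orbits; one must allow $k_n$ of either sign (with $k_{n-1}\ge|k_n|$) and match leading terms against $s_1^{k_1-k_2}\cdots s_{n-1}^{k_{n-1}-k_n}\,(\Delta_n^{\mathrm{sign}(k_n)})^{|k_n|}$, as the paper does. You gesture at this (``parity twist by $\Delta_n^{-}$''), but the induction as written never reaches the invariants whose leading exponent has $k_n<0$. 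Part (ii) is correct and coincides with the paper's argument: the same $\Delta$ as in Proposition \ref{proposition-action-in-odd-case} is $D_n$-invariant, and on its complement the $2n$ quantities $x_i^{\pm 2}$ are pairwise distinct, so no nontrivial signed permutation fixes a point.
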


\begin{proof}
\label{proof-of-the-proposition-action-in-even-case}
Proof of  $(i)$ is similar to the proof of  $(i)$ in Proposition \ref{proposition-action-in-odd-case}.  
Order the Laurent monomials lexicographically. 
Let $\pi=(k_{1}, \dots, k_{n})$ be a sequence of integers such that 
$k_{1}\geq k_{2}\geq\dots\geq |k_{n}|\geq 0$. Note that $k_{n}$ can be negative.
Set
$$\lambda_{\pi}=|\{g\in B_n\mid g\cdot (x_{1}^{k_{1}}\dots x_{n}^{k_{n}})
=x_{1}^{k_{1}}\dots x_{n}^{k_{n}}\}|, \, m_{\pi}=\lambda_{k}^{-1}
\sum_{g\in B_n}{}g\cdot (x_{1}^{k_{1}}\dots x_{n}^{k_{n}}).$$
 Then polynomials $m_{\pi}$ form a basis of the space of $D_n$-invariant Laurent polynomials. The leading monomial in $m_{\pi}$ is $x_{1}^{k_{1}}\dots x_{n}^{k_{n}}$ and
the same leading monomial has  the element 
$$M_{k}:=s_{1}^{k_{1}-k_{2}}\dots s_{n-1}^{k_{n-1}-k_{n}}(\Delta_{n}^{sign(k_{n})})^{|k_{n}|}.$$
Then $ m_{k}-M_{k}$ has a smaller leading monomial and we can proceed by induction. 
Next we show how to choose $P$. 
Note that both $s_n=\Delta^{+}_{n}+\Delta^{-}_{n}$ and $D=\Delta^{+}_{n}\Delta^{-}_{n}$ are $D_n$-invariant and $D$  can be expressed as a polynomial in $s_{1},\dots, s_{n}$. The  leading monomial in $D$ has the degree $(2,2,\dots,2,0)$ and, hence, $s_{n}$ can not enter in the expression for $D$ in the degree greater than $1$, as the degree of  the leading  monomial in $s_{n}$ is $(1,1,\dots,1,1)$.
 It is easy to see that the polynomial part of $D$ consists of the  squares and hence $D\not\in \c[s_{1},\dots,s_{n-1}]$ since it has the same leading monomial as $s_{n-1}^{2}$ and the second in lexicographical order monomial in $s_{n-1}^{2}$ has degree $(2,2,\dots,2,1,1)$. We conclude that
$$\Delta_{n}^{+}\Delta_{n}^{-}=p_{1}(s_{1},\dots,s_{n-1})
+s_{n}p_{0}(s_{1},\dots,s_{n-1}), \text{ i.e }
\Delta_{n}^{-}=\frac{\Delta_{n}^{+}p_{0}+p_{1}}{\Delta_{n}^{+}-p_{0}}.$$
 Set $P=\Delta_{n}^{+}-p_{0}.$  Then  $(i)$  follows.


To show $(ii)$, we take the same polynomial $\Delta$ as in the proof of 
Proposition \ref{proposition-action-in-odd-case}. Then 
$\Delta$ is $D_n$-invariant and $U=X\setminus V(\Delta)$.


\end{proof}

\subsection{Proof of Theorem 3}
 Let $X = \mathbb{T}^n = \mathrm{Spec} \,( k[x_1^{\pm 1}, \ldots, x_n^{\pm 1}])$, 
 $\Lambda = k[x_1,\ldots,x_n]$ and $\Gamma = \Lambda_f$, where $f=x_1\ldots x_n$. 
 Then $X = \mathrm{Spec} \, \Gamma$ is an affine, regular, normal irreducible variety. 
 Then the statement for the symmetric group $S_n$ is analogous to Theorem 2. 
 
We consider first the action of the group $B_n$. By Proposition \ref{proposition-action-in-odd-case}, 
the action of $B_n$ restricts to a free action on $U= \Spec \, \Gamma_{\Delta}$ which is an affine, 
irreducible, regular and normal variety.   By applying Theorem \ref{cor1}, we have  
$\D(U)^{B_n} \cong \D(U/B_n)$ and  $D(\Gamma_\Delta)^{B_n} \cong \D(\Gamma_\Delta^{B_n})$.
By Proposition \ref{prop1}, we may conclude
 $\D(\Gamma)^{B_n}_\Delta \cong \D(\Gamma^{B_n}_\Delta)$. 
 Since $\Gamma^{B_n} \cong \Lambda$ we have 
 $\D(\Gamma)^{B_n}_\Delta \cong \D(\Lambda_\Delta) \cong \D(\Lambda)_\Delta$.
  Forming the skew fields of fractions  we conclude  $\Frac \, \D(X)^{B_n} \cong  \Frac \, \D(X)$.

Consider now the case of the  group $D_n$. Repeating the same steps as above 
we have $\D(\Gamma)^{D_n}_\Delta \cong \D(\Gamma^{D_n}_\Delta)$. By
Proposition \ref{proposition-action-in-even-case}, 
 $\Gamma^{D_n} \simeq  \Lambda_P$ for some polynomial $P$. Therefore, $$\D(\Gamma)^{D_n}_{\Delta} \cong \D((\Gamma_P)_{\tilde{\Delta}})=\D(\Gamma_{P\Delta}) = \D(\Gamma)_{P\Delta}.$$ Forming the skew fields of fractions  we conclude $\Frac \, \D(X)^{D_n} \cong \Frac \, \D(X)\cong F_n$, which completes the proof of Theorem 3.

\section{Galois algebras}
Let $\Gamma$ be an
integral
 domain, $K$ the field of fractions of $\Gamma$,
$K\subset L$ is a finite Galois extension with the Galois
group $G$.  Let
$\mathcal M\subset \Aut L$ be a monoid on which $G$
acts by conjugations, 

Recall that
 an associative  $\c$-algebra $U$
containing $\Gamma$ is called a \emph{Galois $\Gamma$-algebra}{} with respect to $\Gamma$
if it is finitely generated $\Gamma$-subalgebra in $(L*\mathcal M)^{G}$
and $KU=(L*\mathcal M)^{G}, UK=(L*\mathcal M)^{G}$ \cite{FO}.

If $U$ is such algebra then  $S=\Gamma\setminus
\{0\}$ satisfies both left and right
Ore condition and the canonical embedding $U\hookrightarrow (L*\mathcal M)^{G}$ induces the isomorphisms of rings of fractions $[S^{-1}]U\simeq (L*\mathcal M)^{G}$, $
    U[S^{-1}]\simeq (L*\mathcal M)^{G}$.

The following is standard.

\begin{proposition}\label{corol-skew-field-invariants}
If $L*\mathcal M$ is an Ore domain, then $(L*\mathcal M)^G$ is an Ore
domain. If $\mathcal L$ is the skew field of fractions of $L*\mathcal M$,
then the skew field of fractions of $(L*\mathcal M)^G$ coincides
with $\mathcal L^G$, where  the action of $G$ on $\mathcal L$  is induced
by the action of $G$ on $L*\mathcal M$.
\end{proposition}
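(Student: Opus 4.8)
The plan is to prove Proposition~\ref{corol-skew-field-invariants} in two stages: first establish the Ore/domain statement for $(L*\mathcal M)^G$, then identify its skew field of fractions with $\mathcal L^G$.

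First I would note that $(L*\mathcal M)^G$ is a subring of the domain $L*\mathcal M$, hence automatically a domain (no zero divisors are inherited). For the Ore condition on $R:=(L*\mathcal M)^G$, I would use the standard trick of averaging over the finite group $G$: given nonzero $a,b\in R$, since $L*\mathcal M$ is a left Ore domain there exist $p,q\in L*\mathcal M$, not both zero, with $pa=qb$. The element $a$ (and $b$) lies in $R$, so I would symmetrize: consider $\sum_{g\in G}$ of a suitably chosen translate of the relation, or more cleanly, work inside $\mathcal L$ directly (see below) and clear denominators. The cleanest route is actually to prove the two assertions together: show that the natural map $R\hookrightarrow \mathcal L^G$ exhibits $\mathcal L^G$ as a ring of fractions of $R$ with respect to $S:=R\setminus\{0\}$, from which both the Ore condition and the identification of $\Frac(R)$ follow at once.

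So the key steps, in order, are: (1) The action of $G$ on $L*\mathcal M$ extends uniquely to $\mathcal L=\Frac(L*\mathcal M)$ by $g(uv^{-1})=g(u)g(v)^{-1}$ — this is well defined because $G$ acts by ring automorphisms and $L*\mathcal M$ is Ore. (2) $\mathcal L^G$ is a skew subfield of $\mathcal L$: it is clearly a subring closed under inverses, since $g(x)=x$ implies $g(x^{-1})=x^{-1}$. (3) Every element $x\in\mathcal L^G$ can be written with denominator in $R$: starting from any expression $x=uv^{-1}$ with $u,v\in L*\mathcal M$, set $w:=\prod_{g\in G}g(v)$ (taken in some fixed order, or better, use that $\mathcal L$ is a field so $\prod g(v)$ makes sense as a product of nonzero elements), which is $G$-invariant up to the Ore condition reordering; then $xw\in\mathcal L^G$ and one checks $xw, w\in R$ after a further clearing. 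Here one must be slightly careful because $\mathcal L$ is noncommutative, so "the product of the $G$-translates of $v$" is not canonically $G$-invariant; the fix is to take a common left denominator of $\{g(v)^{-1}:g\in G\}$ inside the Ore domain $\mathcal L$, call it $d$, chosen $G$-invariantly by taking $d$ to generate $\bigcap_g (L*\mathcal M)g(v)$ or simply by iterating the Ore condition and then symmetrizing. (4) Conclude: $R\setminus\{0\}$ is Ore in $R$ and $\Frac(R)=\mathcal L^G$, which in particular gives that $(L*\mathcal M)^G$ is an Ore domain.

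The main obstacle is step (3): in the noncommutative setting one cannot simply multiply the $G$-conjugates of a denominator and expect invariance, so the construction of a single $G$-invariant common denominator inside $\mathcal L$ requires care. I would handle this by first using the left Ore condition in the domain $L*\mathcal M$ repeatedly to find, for the finite set $\{g(v):g\in G\}$, a common left multiple, i.e.\ an element $d\ne 0$ and elements $c_g$ with $d=c_g\,g(v)$ for all $g$; then replace $d$ by a genuinely $G$-invariant choice by observing that $h(d)$ is again a common left multiple for every $h\in G$ and invoking the fact that any two common left multiples have a common left multiple — iterating over the finite group $G$ produces a $G$-invariant one. With such a $d\in R$ in hand, $dx\in\mathcal L$ has $d x = d u v^{-1}$, and since $d=c_e v$ we get $dx = c_e u\in L*\mathcal M$, and $G$-invariance of both $d$ and $x$ forces $dx\in R$; hence $x=d^{-1}(dx)$ with $d, dx\in R$, completing the argument. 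Everything else is routine verification.
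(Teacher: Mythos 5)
The paper offers no proof of this proposition --- it is flagged as ``standard'' (the relevant citation being Faith \cite{Fa} for the Ore part) --- so your attempt has to be measured against the standard argument rather than against anything in the text. Your overall strategy is the right one: show that every $x\in\mathcal L^G$ admits a denominator lying in $R:=(L*\mathcal M)^G$, and deduce both the Ore property of $R$ and the identification $\Frac(R)=\mathcal L^G$ in one stroke. The problem is that the step you yourself single out as the main obstacle, step (3), is not actually carried out. Producing a common left multiple $d$ of $\{g(v):g\in G\}$ is fine (the intersection $I=\bigcap_g (L*\mathcal M)g(v)$ of finitely many nonzero left ideals in a left Ore domain is nonzero, and $I$ is visibly $G$-stable). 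But your recipe for upgrading $d$ to a $G$-\emph{invariant} element --- ``$h(d)$ is again a common left multiple, any two common left multiples have a common left multiple, iterating over the finite group produces a $G$-invariant one'' --- does not terminate in a fixed point of $G$: a common left multiple of the orbit $\{h(d)\}$ is just another element of $I$, with no more claim to $G$-invariance than $d$ itself, and nothing forces the iteration to stabilize. Extracting a nonzero element of $I^G$ from a nonzero $G$-stable left ideal $I$ is precisely the hard content of the proposition. The obvious repair, averaging $d\mapsto\sum_{h\in G}h(d)$, can give $0$; and the Dedekind independence argument that rescues this over commutative fields does not transfer verbatim to division rings (left linear independence of distinct automorphisms fails in general in the presence of inner automorphisms). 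There is also a smaller slip at the end: with $d=c_e v$ and $x=uv^{-1}$ a \emph{right} fraction, $dx=c_evuv^{-1}\neq c_eu$ in general; you need to start from a left fraction $x=v^{-1}u$ for that cancellation.

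The standard proofs circumvent the invariant-denominator problem by using structure you have not invoked. Faith's argument runs through the extension of division rings $\mathcal L^G\subseteq\mathcal L$: since $G$ is finite (and acts outerly here, as it moves $L$), $\mathcal L$ is a finite-dimensional vector space over $\mathcal L^G$, and one deduces that $R$ is an order in $\mathcal L^G$ without ever exhibiting an explicit invariant denominator. In the Galois-algebra setting of this paper there is an even more concrete route: $L/K$ is a finite Galois extension of \emph{commutative} fields with group $G$, so Dedekind's lemma applies to $L$ and the partial traces $a\mapsto\sum_{g\in G}g(\ell a)$, $\ell\in L$, show that $L\cdot(L*\mathcal M)^G=L*\mathcal M$; hence $\mathcal L$ is spanned over $\Frac(R)$ by a $K$-basis of $L$, giving $[\mathcal L:\Frac(R)]\le|G|=[\mathcal L:\mathcal L^G]$ and therefore $\Frac(R)=\mathcal L^G$. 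I would recommend replacing step (3) by one of these two arguments; as written, the proof has a genuine gap at its central point.
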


We immediately have

\begin{corollary}\label{corol-skew-field-Galois} Let $U$ be a Galois $\Gamma$-algebra and the skew group algebra $L*\mathcal M$ is the left and the right Ore domain with the skew field of fractions $\mathcal L$. Then $U$ is the left and right Ore domain and for its skew field of fractions
$\mathcal U$ holds $\mathcal U=\mathcal L^G$. In particular, all Galois
subalgebras  with respect to  $\Gamma$ in $(L*\mathcal M)^G$ have the
same skew field of fractions.

\end{corollary}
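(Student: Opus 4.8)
The plan is to deduce Corollary~\ref{corol-skew-field-Galois} from Proposition~\ref{corol-skew-field-invariants} together with the structural facts about Galois algebras that have already been recalled in this section. First I would fix a Galois $\Gamma$-algebra $U$, so that by definition $U$ is a finitely generated $\Gamma$-subalgebra of $(L*\mathcal M)^G$ with $KU=UK=(L*\mathcal M)^G$, and $S=\Gamma\setminus\{0\}$ satisfies the left and right Ore conditions in $U$, with $[S^{-1}]U\simeq (L*\mathcal M)^G\simeq U[S^{-1}]$. Since $L*\mathcal M$ is assumed to be a left and right Ore domain, so is its subring $(L*\mathcal M)^G$ by Proposition~\ref{corol-skew-field-invariants}; hence $U$, being a subring of the domain $(L*\mathcal M)^G$, is itself a domain.

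Next I would establish that $U$ is a left and right Ore domain and identify its skew field of fractions. The key observation is that the chain of inclusions $U\subset (L*\mathcal M)^G\subset \mathcal L$ realizes $\mathcal L$ as a localization of $U$ in two stages: first invert $S=\Gamma\setminus\{0\}$ to pass from $U$ to $(L*\mathcal M)^G$ (this is exactly the isomorphism $[S^{-1}]U\simeq (L*\mathcal M)^G$ recalled above), and then invert the remaining nonzero elements of the Ore domain $(L*\mathcal M)^G$ to reach its skew field of fractions $\mathcal L^G$, which by Proposition~\ref{corol-skew-field-invariants} is precisely the skew field of fractions of $(L*\mathcal M)^G$. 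A composite of Ore localizations is again an Ore localization, so every nonzero element of $U$ becomes invertible in $\mathcal L^G$ and the Ore conditions for $U$ follow; therefore $U$ is a left and right Ore domain whose skew field of fractions $\mathcal U$ embeds in $\mathcal L^G$. Conversely $\mathcal U$ must contain $(L*\mathcal M)^G=[S^{-1}]U$ and hence all of $\mathcal L^G=\Frac((L*\mathcal M)^G)$, giving $\mathcal U=\mathcal L^G$. Finally, since $\mathcal L^G$ depends only on $L$, $\mathcal M$ and $G$ and not on the particular choice of Galois $\Gamma$-subalgebra $U\subset (L*\mathcal M)^G$, all such Galois subalgebras share the same skew field of fractions $\mathcal L^G$, which is the last assertion of the corollary.

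The only genuinely nontrivial input is Proposition~\ref{corol-skew-field-invariants}: the passage from ``$L*\mathcal M$ is Ore'' to ``$(L*\mathcal M)^G$ is Ore with skew field of fractions $\mathcal L^G$'' rests on the fact that for a finite group $G$ acting on an Ore domain $R$, the fixed ring $R^G$ is Ore and $\Frac(R^G)=\Frac(R)^G$; this is a standard Bergman–Isaacs/Montgomery-type argument (using that $R$ is a finitely generated $R^G$-module, or a trace/averaging argument to clear denominators), and I would simply cite it as already done. Granting that, the corollary itself involves no serious obstacle — the main point to be careful about is the bookkeeping of the two-step localization $U\rightsquigarrow (L*\mathcal M)^G\rightsquigarrow \mathcal L^G$ and checking that composing Ore localizations preserves the Ore property, which is routine. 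Thus the statement ``we immediately have'' is justified: Corollary~\ref{corol-skew-field-Galois} is a direct consequence of Proposition~\ref{corol-skew-field-invariants} and the definition of a Galois $\Gamma$-algebra.
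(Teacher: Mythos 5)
Your proposal is correct and follows essentially the same route as the paper: both deduce from Proposition~\ref{corol-skew-field-invariants} that $(L*\mathcal M)^G\simeq U[S^{-1}]$ is an Ore domain with skew field $\mathcal L^G$, and then transfer the Ore property and the quotient division ring down to $U$ through the two-step localization $U\rightsquigarrow U[S^{-1}]\rightsquigarrow\mathcal L^G$. The only difference is that the step you label ``routine'' is exactly what the paper writes out, namely clearing denominators via the commutativity of $S$ to turn the Ore relation $u_1v_1s_1^{-1}=u_2v_2s_2^{-1}$ in $U[S^{-1}]$ into the relation $u_1(v_1s_2)=u_2(v_2s_1)$ inside $U$; note that this identity (rather than mere invertibility of the elements of $U$ in the division ring $\mathcal L^G$, which by itself would not suffice) is what actually yields the Ore condition for $U$.
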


\begin{proof}
Due to
Proposition~\ref{corol-skew-field-invariants}, $U[S^{-1}]\simeq
(L*\mathcal M)^{G}$ is an Ore domain, $S=\Gamma\setminus \{0\}$.
Hence for any $u_{1},u_{2}\in U$ there exist
$v_{1},v_{2}\in U,s_{1},s_{2}\in S$, such that
$u_{1}v_{1}s_{1}^{-1}=u_{2}v_{2}s_{2}^{-1}$. Since $S$ is
commutative we get $u_{1}v_{1}s_{2}=u_{2}v_{2}s_{1}$. Other
 conditions  of Ore rings are proved analogously.
\end{proof}

Let  $V$ be a $\c$-vector space, $\dim_{\c}V=N$, $L$  the field of fractions of the symmetric algebra $S(V)$ and $G\subset \Aut(L)$ is the classical reflection group whose action on $L$ is induced  by its action by reflections on  $V$. Then $L=\c(t_{1},\dots, t_{N})$ and $K=L^G$ is a purely transcendental extension of $\c$ by the Chevalley-Shephard-Todd Theorem. If $\mathcal M\subset \Aut(L)$ is a subgroup such that $G$ normalizes $\mathcal M$ and $U$ is a Galois algebra in $(L*\mathcal M)^G$ with respect to a polynomial subalgebra $\Gamma$ such that the field of fractions   
 $F(\Gamma)$ is isomorphic to $ K$, then such $U$ will be called a \emph{linear Galois algebra}. In particular, $\mathcal K=(L*\mathcal M)^G$ is itself a linear 
Galois algebra with respect to $\Gamma$ if $L$, $G$, $\mathcal M$, $K$ and $\Gamma$ are as above. Other examples of linear Galois algebras are the universal enveloping algebras of  $gl_{n}$ and $sl_{n}$  with respect to their Gelfand-Tsetlin subalgebras. 

We will need the following action of $\mathcal M$ on $L$.     Suppose   $\mathcal M = \mathbb Z^{N}$  is a free abelian group of rank $N$ generated by $\sigma_{i}, i=1, \ldots, N$.      We say that   $\mathbb Z^{N}$ acts by shifts on $L$ if   $\sigma_i$, $i=1, \ldots, N$ act on $t_j$ as follows:  $\sigma_i(t_j)=t_j-\delta_{ij}$. 
Thus we can construct  a skew group algebra $L*\mathcal M$.  
As a subgroup of automorphisms of $L$,  $G$ normalizes $\mathcal M$
$$\sigma_{k}^{\pi}=\delta_{\pi(k)}, \pi\in S_{p}, \sigma_{k}^{\epsilon_{i}}=\left\{\begin{array}
{ll}\sigma_{k},&\text{ if } i\neq k\\
\sigma_{k}^{-1}&\text{ otherwise, }
\end{array}\right.$$

Hence $G$ acts on $L*\mathcal M$.  We will call this action \emph{natural}.


Let $R_N=\c[t_1, \ldots, t_N]*\mathbb Z^N$, where the group $\mathbb Z^N$ is generated by the elements  $\sigma_i$, $i=1, \ldots, N$ as above. For each $i=1, \ldots, N$ and any $c\in k$ consider the  involutions $\epsilon_{R_N,c,i}^{\pm}$ on $R_N$ defined by $\epsilon_{R_N,c,i}^{\pm}(\sigma_i)=\pm\,\sigma_i^{-1}$,    $\epsilon_{R_N,c,i}^{\pm}(\sigma_j)=\sigma_j$ if $i\neq j$,    $\epsilon_{R_N,c,i}^{\pm}(t_i)=c-t_i$ and 
 $\epsilon_{R_N,c,i}^{\pm}(t_j)=t_j$ if $j\neq i$.

\begin{lemma}
\label{lemma-on-transformation-inversion-mult-inversion-add} Let $\Tilde{A_{N}}$ be the localisation of the Weyl algebra $A_N$ by the multiplicative set generaled by $\{x_1, \ldots, x_N\}$.
The homomorphism $\phi^{\pm}_{c}:\Tilde{A_{N}}\rightarrow R_N$, given by $$\phi^{\pm}_{c}(x_i)=\sigma_i,\ \phi^{\pm}_{c}(\partial_i)=\big(t_i+1-\dfrac{c}{2}\big)\sigma_i^{-1}+ (1\mp\sigma_i^{-2}),$$ $i=1, \ldots, N$ is an isomorphism of algebras with involutions.
\end{lemma}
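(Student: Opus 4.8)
The plan is to verify directly that $\phi^{\pm}_c$ is a well-defined algebra homomorphism, then exhibit an inverse. First I would check that the images $\phi^{\pm}_c(x_i)=\sigma_i$ and $\phi^{\pm}_c(\partial_i)=(t_i+1-\tfrac{c}{2})\sigma_i^{-1}+(1\mp\sigma_i^{-2})$ satisfy the defining relations of $\tilde{A}_N$: namely $[x_i,x_j]=0$, $[\partial_i,\partial_j]=0$, $[\partial_i,x_j]=\delta_{ij}$, and that $x_i$ is sent to a unit (clear, since $\sigma_i$ is invertible in $R_N$). The commuting of the $x_i$-images is immediate since the $\sigma_i$ commute. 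For $[\partial_i,x_i]=1$ one computes $[\,(t_i+1-\tfrac c2)\sigma_i^{-1}+(1\mp\sigma_i^{-2}),\,\sigma_i\,]$ in $R_N$ using $\sigma_i t_i=(t_i-1)\sigma_i$, equivalently $t_i\sigma_i^{-1}=\sigma_i^{-1}(t_i+1)$; the constant term $(1\mp\sigma_i^{-2})$ commutes with $\sigma_i$, and $[(t_i+1-\tfrac c2)\sigma_i^{-1},\sigma_i]$ telescopes to $1$. For $i\neq j$ everything in the two expressions involves disjoint sets of variables and commutes. The relation $[\partial_i,\partial_j]=0$ for $i\neq j$ is again by disjointness; so the only genuinely nontrivial check is that the $\partial_i$-image squares consistently, but since there is only one index involved there is no relation to verify there beyond well-definedness.

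Next I would construct the inverse map $\psi^{\pm}_c:R_N\to\tilde{A}_N$ by setting $\psi^{\pm}_c(\sigma_i)=x_i$, $\psi^{\pm}_c(\sigma_i^{-1})=x_i^{-1}$, and solving for the image of $t_i$. From $\phi^{\pm}_c(\partial_i)=(t_i+1-\tfrac c2)\sigma_i^{-1}+(1\mp\sigma_i^{-2})$ we should get $t_i\mapsto (\partial_i x_i - 1 \pm x_i^{-1}) x_i + \tfrac c2 - 1 = \partial_i x_i^2 - x_i \pm 1 + \tfrac c2 - 1$, up to reordering; one then checks that this assignment respects $\sigma_i t_i \sigma_i^{-1}=t_i-1$ and the commutation among the $t_j$'s and $\sigma_j$'s. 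Verifying $\psi^{\pm}_c\circ\phi^{\pm}_c=\mathrm{id}$ and $\phi^{\pm}_c\circ\psi^{\pm}_c=\mathrm{id}$ on generators is then a short computation. Alternatively — and this is cleaner — I would note that $\sigma_i\mapsto x_i$ and $t_i\mapsto$ (the above) clearly defines a homomorphism $R_N\to\tilde A_N$ because $x_i$ is a unit and the Weyl-algebra relations force $x_i(\partial_i x_i^2)x_i^{-1} = \partial_i x_i^2 - x_i$ up to the right bookkeeping, matching $t_i-1$; composing the two maps and comparing on the finite generating set $\{x_i^{\pm1},\partial_i\}$ finishes it.

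Finally I would address the phrase \emph{isomorphism of algebras with involutions}: $\tilde A_N$ carries the involution $\varepsilon_{N,i}^{\mp}$ (note the sign flip relative to the superscript on $\phi$, because $\tau_i^-(x)=-x^{-1}$ corresponds to $\sigma_i\mapsto-\sigma_i^{-1}$), and $R_N$ carries $\epsilon_{R_N,c,i}^{\pm}$. One checks compatibility on generators: $\phi^{\pm}_c(\varepsilon_{N,i}^{\mp}(x_i))=\phi^{\pm}_c(\mp x_i^{-1})=\mp\sigma_i^{-1}=\epsilon_{R_N,c,i}^{\pm}(\sigma_i)$, and $\phi^{\pm}_c(\varepsilon_{N,i}^{\mp}(\partial_i))=\phi^{\pm}_c(\pm x_i^2\partial_i)$, which should reduce, using the defining formulas and $\epsilon_{R_N,c,i}^{\pm}(t_i)=c-t_i$, to $\epsilon_{R_N,c,i}^{\pm}(\phi^{\pm}_c(\partial_i))=(c-t_i+1-\tfrac c2)\sigma_i+(1\mp\sigma_i^{2})$; the bulk of the work is confirming this last identity in $\tilde A_N$ by a direct reordering computation. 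I expect the main obstacle to be precisely this involution-compatibility check for $\partial_i$ — keeping track of the two independent sign choices ($\pm$ in $\phi$ versus the induced $\mp$ on $\tilde A_N$) and of the shift $t_i\mapsto c-t_i$ simultaneously — rather than the bare homomorphism or bijectivity statements, which are routine. The off-diagonal ($i\neq j$) cases throughout are trivial by support disjointness.
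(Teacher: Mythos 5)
Your proposal follows essentially the same route as the paper's proof: reduce to a single variable (the off-diagonal cases being trivial by disjointness of supports), verify that the one nontrivial Weyl relation $[\partial_i,x_i]=1$ is preserved using $\sigma_i t_i\sigma_i^{-1}=t_i-1$, exhibit the inverse on generators, and check the intertwining of the involutions on generators. There is, however, one concrete error to repair: the involution on $\tilde{A}_N$ that $\phi^{\pm}_{c}$ intertwines with $\epsilon_{R_N,c,i}^{\pm}$ is $\varepsilon_{N,i}^{\pm}$, with the \emph{same} superscript, not $\varepsilon_{N,i}^{\mp}$. Indeed $\varepsilon_{N,i}^{\pm}(x_i)=\pm x_i^{-1}\mapsto \pm\sigma_i^{-1}=\epsilon_{R_N,c,i}^{\pm}(\sigma_i)$; with your sign-flipped pairing the check already fails on $x_i$, and your displayed chain $\phi^{\pm}_{c}(\mp x_i^{-1})=\mp\sigma_i^{-1}=\epsilon_{R_N,c,i}^{\pm}(\sigma_i)$ is false, since the right-hand side equals $\pm\sigma_i^{-1}$ by definition. (Your own parenthetical justification, that $\tau_i^{-}(x)=-x^{-1}$ corresponds to $\sigma_i\mapsto-\sigma_i^{-1}$, actually shows the superscripts match.) Relatedly, your target identity for $\partial_i$ drops a sign: since $\epsilon_{R_N,c,i}^{\pm}(\sigma_i^{-1})=\pm\sigma_i$, one gets $\epsilon_{R_N,c,i}^{\pm}\phi^{\pm}_{c}(\partial_i)=\mp\bigl(t_i-1-\tfrac{c}{2}\bigr)\sigma_i+(1\mp\sigma_i^{2})$, which is exactly what $\phi^{\pm}_{c}\bigl(\varepsilon_{N,i}^{\pm}(\partial_i)\bigr)=\phi^{\pm}_{c}(\mp x_i^{2}\partial_i)=\mp\sigma_i^{2}\phi^{\pm}_{c}(\partial_i)$ evaluates to. With these signs corrected your argument goes through and coincides with the one in the paper.
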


\begin{proof} It is sufficient to consider the case $N=1$. Set $R=R_1$.
Since $\phi^{\pm}_{c}(\partial x - x \partial)=t-\sigma t\sigma^{-1}=1$, $\phi^{\pm}_{c}$ are homomorphisms, $(\phi^{\pm}_{c})^{-1}(\sigma)=x$ and $(\phi^{\pm}_{c})^{-1}(t)=\big(\partial x+\dfrac{1}{2}\big)+(\dfrac{1}{x}\mp x).$
We also have $$\epsilon_{R,c}^{\pm}\phi^{\pm}_{c}(\partial)=\mp\big(t-1-\dfrac{c}{2}\big)\sigma+ (1\mp\sigma^{2})$$ and
$$ \phi^{\pm}_{c}\epsilon_{\tilde{A}_{1},c}^{\pm}(\partial)= \mp\sigma^{2}
\bigg(\big(t+1-\dfrac{c}{2}\big)\sigma^{-1}+ (1\mp\sigma^{-2}) \bigg)=\mp\big(t-1-\dfrac{c}{2}\big)\sigma+ (1\mp\sigma^{2}).$$
\end{proof}

For integers $n\geq 1$, $m\geq 0$ denote $A_{n,m}$ the $n$-th Weyl algebra over the field of rational functions $\c(z_1, \ldots, z_m)$. 
Then $A_{n,m}$ admits the skew field of fractions  $F_{n, m}\simeq F_n\otimes \c(z_1, \ldots, z_m)$.  We have

\begin{lemma}
\label{lem-skew-main}
Let $\mathcal K=(L*\mathcal M)^G$ be a linear Galois algebra where $G=G_N$ is a classical Weyl group and
\begin{itemize}
\item
 $L=\c(t_{1},\dots, t_{N})$; 
 \item $G$ acts naturally on $\mathcal K$;
\item
$\mathcal M\simeq \mathbb Z^{n}$  acts by shifts on  $t_{1},\dots, t_{n}$, $n\leq N$.
\end{itemize}  Then $\mathcal K$ admits the skew field of fractions $F(\mathcal K)$ and 
$F(\mathcal K)\simeq F_{n,N-n}.$

\end{lemma}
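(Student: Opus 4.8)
The plan is to reduce the statement to the torus computation of Theorem~\ref{main-2} (Theorem 3) via the isomorphism $\phi^{\pm}_c$ of Lemma~\ref{lemma-on-transformation-inversion-mult-inversion-add}, carried out compatibly with the $G_N$-action. First I would split the variables into the ``active'' shift variables $t_1,\dots,t_n$ and the ``inert'' variables $t_{n+1},\dots,t_N$. Since $\mathcal M\simeq\mathbb Z^n$ acts only on $t_1,\dots,t_n$, the skew group algebra factors as $L*\mathcal M\cong \bigl(\c(t_1,\dots,t_n)*\mathbb Z^n\bigr)\otimes_{\c}\c(t_{n+1},\dots,t_N)$ over a common scalar field, so that $R_n\subset \c(t_1,\dots,t_n)*\mathbb Z^n$ after passing to fractions; here $R_n=\c[t_1,\dots,t_n]*\mathbb Z^n$ as in the excerpt. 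Concretely, $L*\mathcal M$ is the localization/fraction field of $R_n\otimes_{\c}\c[t_{n+1},\dots,t_N]$, and hence $\Frac(L*\mathcal M)\cong\Frac(R_n)\otimes\c(z_1,\dots,z_{N-n})$ with $z_j=t_{n+j}$.

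Next I would transport the $G_N$-action. The key point is that the natural action of the classical Weyl group $G_N$ on $L*\mathcal M$ — permutations via $S_n$ on the pairs $(t_i,\sigma_i)$ and sign changes via the $\epsilon_i$ with $\sigma_k^{\epsilon_i}=\sigma_k^{-1}$ and (implicitly) $t_i\mapsto -t_i$ or $c-t_i$ — is precisely the action on $R_n$ generated by the permutations and the involutions $\epsilon_{R_n,c,i}^{\pm}$ of the paragraph preceding Lemma~\ref{lemma-on-transformation-inversion-mult-inversion-add}, with the inert variables $z_j$ fixed. By Lemma~\ref{lemma-on-transformation-inversion-mult-inversion-add} the map $\phi^{\pm}_c\colon \widetilde{A_n}\to R_n$ is an isomorphism of algebras with involutions, i.e. it intertwines $\varepsilon^{\pm}_{n,i}$ on $\widetilde{A_n}$ with $\epsilon_{R_n,c,i}^{\pm}$ on $R_n$; it visibly also intertwines the two $S_n$-actions since $\phi^{\pm}_c$ is symmetric in the index $i$. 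Therefore $\phi^{\pm}_c$ is a $G_N$-equivariant isomorphism (choosing the sign that matches the chosen action of $(\mathbb Z/2\mathbb Z)$-part: ``$-$'' for $B_n$, ``$+$'' for $D_n$, compare Propositions~\ref{proposition-action-in-odd-case} and \ref{proposition-action-in-even-case}), so that $R_n^{G_n}\cong \widetilde{A_n}^{G_n}=\D(\mathbb T^n)^{G_n}$ as the relevant reflection group acts on the torus $\mathbb T^n$.

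Now I would assemble the pieces. Passing to skew fields of fractions and using Proposition~\ref{corol-skew-field-invariants} (so that invariants commute with forming $\Frac$ here, the extension $K\subset L$ being finite Galois with group the ``point'' Galois group, while the relevant $G$ acting on $\mathcal K$ is the classical Weyl group $G_N$ acting by the natural action):
\begin{equation}
F(\mathcal K)=\Frac\bigl((L*\mathcal M)^{G_N}\bigr)\cong \Frac(L*\mathcal M)^{G_N}\cong \bigl(F(\D(\mathbb T^n))\otimes \c(z_1,\dots,z_{N-n})\bigr)^{G_N},
\end{equation}
where $G_N$ acts on the first tensor factor through the torus action of Theorem~\ref{main-2} and trivially on the $z_j$. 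Since the $z_j$ are central and fixed, the invariants distribute: the right-hand side is $F(\D(\mathbb T^n))^{G_N}\otimes\c(z_1,\dots,z_{N-n})$, and by Theorem~\ref{main-2} this is $F_n\otimes\c(z_1,\dots,z_{N-n})\cong F_{n,N-n}$ by the identification $F_{n,m}\cong F_n\otimes\c(z_1,\dots,z_m)$ recalled just before the lemma. Finally, $\mathcal K$ is an Ore domain and embeds in its skew field of fractions because $L*\mathcal M$ is an Ore domain (it is a skew group algebra of $\mathbb Z^n$ over a field, iterated Ore extension) and Corollary~\ref{corol-skew-field-Galois} applies.

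The main obstacle I expect is the bookkeeping in the equivariance step: one must check carefully that the ``natural'' action of $G_N$ on $L*\mathcal M$ (with its sign conventions on $t_i$ and on $\sigma_i$) is matched under $\phi^{\pm}_c$ with the torus action used in Theorem~\ref{main-2} — in particular that the action on the $t_i$ (the ``$c-t_i$'' versus ``$-t_i$'' ambiguity, and the role of the constant $c$) is compatible with the involution $\epsilon^{\pm}_{R_n,c,i}$, and that the choice of sign $\pm$ (type $B_n$ versus $D_n$) is consistent throughout. Once that dictionary is pinned down, the rest is the formal manipulation of fraction fields and the invocation of Theorem~\ref{main-2}, Proposition~\ref{corol-skew-field-invariants}, and Corollary~\ref{corol-skew-field-Galois}.
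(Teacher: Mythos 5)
Your proposal is correct and follows essentially the same route as the paper: split off the inert variables $t_{n+1},\dots,t_N$, identify $L(t_1,\dots,t_n)*\mathcal M$ equivariantly with (a localization of) $R_n\cong\widetilde{A_n}$ via Lemma~\ref{lemma-on-transformation-inversion-mult-inversion-add}, distribute the $G$-invariants over the tensor factors, and pass to skew fields of fractions. If anything, your invocation of Theorem~\ref{main-2} for the transported action (which is the torus action $x_i\mapsto\pm x_i^{-1}$, not a linear one) is more precise than the paper's citation of Theorem~\ref{main} at the corresponding step.
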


\begin{proof}
 Since the action of $\mathcal M$ is trivial on $L(t_{n+1}, \ldots, t_N)$ then we have the   $G$-equivariant embedding $$(L(t_1, \ldots, t_n)*\mathcal M)\otimes L(t_{n+1}, \ldots, t_N) \hookrightarrow L*\mathcal M$$ and 
 $$((L(t_1, \ldots, t_n)\otimes L(t_{n+1}, \ldots, t_N))*\mathcal M)^G\hookrightarrow (L*\mathcal M)^G.$$  Moreover, both algebras have the same skew fields.  But $$((L(t_1, \ldots, t_n)*\mathcal M)\otimes L(t_{n+1}, \ldots, t_N))^G\simeq ((L(t_1, \ldots, t_n)*\mathcal M)^G\otimes L(t_{n+1}, \ldots, t_N)^G.$$ Since $(L(t_1, \ldots, t_n)*\mathcal M\simeq R_n$, $R_n^G=R_n^{G_n}$, $F(R_n)\simeq F(A_n)$ and $L(t_{n+1}, \ldots, t_N)^G\simeq 
 L(z_1, \ldots z_{N-n})$ we obtain  $$F(\mathcal K)\simeq F(A_n)^G\otimes L(z_1, \ldots z_{N-n}).$$ The result follows from Theorem~\ref{main}. 

\end{proof}

\begin{theorem}
\label{theorem-main-theorem}
Let $U$ be a linear Galois algebra  in $(L*\mathcal M)^G$ such that
\begin{itemize}
\item
 $L=\c(t_{ij}, i=1, \ldots, N; j=1, \ldots, n_i; z_1, \ldots z_m)$, for some integers $m$, $n_1$, $\ldots$, $n_N$; 
 \item $G=G_1\times \ldots \times G_N$, where $G_s$ acts normally only on  variables $t_{s1}$, $\ldots$, $t_{s, n_s}$, $s=1, \ldots, N$;
\item
$\mathcal M\simeq \mathbb Z^{n}$  acts by shifts on  $t_{11},\dots, t_{N,n_N}$, $n=n_1+\ldots + n_N$.
\end{itemize}  Then $U$ admits the skew field of fractions $F(U)$ and 
$F(U)\simeq F_{n, m}.$
\end{theorem}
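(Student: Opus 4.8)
The plan is to reduce Theorem~\ref{theorem-main-theorem} to Lemma~\ref{lem-skew-main} by exploiting the product structure of both $G$ and $\mathcal M$. Since $G=G_1\times\dots\times G_N$ and each $G_s$ acts only on the block of variables $t_{s1},\dots,t_{s,n_s}$ (and $\mathcal M\simeq\mathbb Z^{n_1}\times\dots\times\mathbb Z^{n_N}$ splits correspondingly, with $\mathbb Z^{n_s}$ acting by shifts on the $s$-th block and trivially on the others), the skew group algebra $L*\mathcal M$ decomposes, up to passing to skew fields, as a twisted tensor product over $\c$ of the $N$ factors $L_s*\mathcal M_s$ where $L_s=\c(t_{s1},\dots,t_{s,n_s})$, together with the central purely transcendental factor $\c(z_1,\dots,z_m)$ on which nothing acts. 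First I would make this decomposition precise at the level of fraction fields, exactly as in the proof of Lemma~\ref{lem-skew-main}: the variables not moved by $\mathcal M_s$ and not touched by $G_s$ can be pulled out of the corresponding factor, so that $(L*\mathcal M)^G$ and $\big(\bigotimes_{s=1}^N (L_s*\mathcal M_s)\otimes \c(z_1,\dots,z_m)\big)^{G_1\times\dots\times G_N}$ have the same skew field of fractions.

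Next I would use the compatibility of taking invariants with this tensor decomposition: because $G_s$ acts nontrivially only on the $s$-th tensor factor, one has, after passing to skew fields,
\begin{equation*}
\Big(\bigotimes_{s=1}^N (L_s*\mathcal M_s)\otimes \c(z_1,\dots,z_m)\Big)^{G}\ \simeq\ \bigotimes_{s=1}^N (L_s*\mathcal M_s)^{G_s}\ \otimes\ \c(z_1,\dots,z_m).
\end{equation*}
Then I would apply the single-block case, which is precisely (the skew-field statement of) Lemma~\ref{lem-skew-main} with $N$ there equal to $n_s$ and no extra variables: $F\big((L_s*\mathcal M_s)^{G_s}\big)\simeq F_{n_s}$. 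Multiplying the blocks together and adjoining the $m$ central rational variables $z_1,\dots,z_m$ gives $F(U)\simeq F_{n_1}\otimes\dots\otimes F_{n_N}\otimes\c(z_1,\dots,z_m)$. Since $F_{a}\otimes F_{b}\simeq F_{a+b}$ and $F_n\otimes\c(z_1,\dots,z_m)\simeq F_{n,m}$, with $n=n_1+\dots+n_N$, this yields $F(U)\simeq F_{n,m}$ as claimed. Finally, since $U$ is a linear Galois algebra in $(L*\mathcal M)^G$ and $L*\mathcal M$ is an Ore domain (it is a skew group algebra of $\mathbb Z^n$ over a field, hence Noetherian and a domain), Corollary~\ref{corol-skew-field-Galois} guarantees that $U$ is an Ore domain whose skew field of fractions coincides with that of $(L*\mathcal M)^G$, so the computation above applies verbatim to $F(U)$.

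The step I expect to be the main obstacle is justifying the tensor-factorization of the skew field of $(L*\mathcal M)^G$ cleanly — in particular checking that the embedding $\bigotimes_s(L_s*\mathcal M_s)\otimes\c(\underline z)\hookrightarrow L*\mathcal M$ is $G$-equivariant and induces an isomorphism on skew fields, and that invariants commute with the tensor product after localization. This is the same subtlety handled in the proof of Lemma~\ref{lem-skew-main}, where the argument is given for two factors; here one simply iterates it, but some care is needed to see that the iterated skew-field statements are consistent and that $G$ acting diagonally as $G_1\times\dots\times G_N$ really does decouple across the factors. Once that bookkeeping is in place, the rest is a matter of invoking Lemma~\ref{lem-skew-main} and the elementary isomorphisms $F_a\otimes F_b\simeq F_{a+b}$, $F_n\otimes\c(z_1,\dots,z_m)\simeq F_{n,m}$.
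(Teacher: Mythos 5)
Your proposal is correct and follows essentially the same route as the paper, whose own proof is just the one-line assertion that the theorem ``follows from Theorem 3 and Lemma~\ref{lem-skew-main}'': you reduce to the single-block case of Lemma~\ref{lem-skew-main} by decomposing $L*\mathcal M$ (up to common skew fields of fractions) as a tensor product of the blocks $(L_s*\mathcal M_s)$ with the central factor $\c(z_1,\dots,z_m)$, take $G$-invariants factorwise, and assemble via $\Frac(F_a\otimes F_b)\simeq F_{a+b}$ and Corollary~\ref{corol-skew-field-Galois}. In fact your write-up supplies the block-by-block bookkeeping that the paper omits, and it matches the intended argument.
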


\begin{proof}
Follows from Theorem 3 and Lemma \ref{lem-skew-main}.

\end{proof}

\begin{remark}
\label{remark-case-of-s-n} The  action of the symmetric group $S_{N}$  on $\c^{N}$ by permutations of the coordinates is obviously   linear and it normalizes the action of $\mathcal M=\mathbb Z^{N}$ on $\c^{N}$ by shifts. Recall that $U(gl_{n})$ and $U(sl_{n})$ are linear Galois algebras with respect to their Gelfand-Tsetlin subalgebras \cite{FO}. Then Theorem~\ref{theorem-main-theorem}  implies immediately the Gelfand-Kirillov conjecture for $gl_{n}$ and $sl_{n}$. In a similar manner one obtains the Gelfand-Kirillov conjecture for restricted Yangians of type $A$ which was shown in \cite{FMO}.
\end{remark}


\begin{thebibliography}{100}


\bibitem{AD} J. Alev,  F. Dumas, {\it Invariants d'operateurs differentiels et probleme de Nother}, Studies in Lie Theory :
A.Joseph Festschrift (eds. J.Bernstein, V.Hinich and A.Melnikov) Birkhauser, 2006. 

\bibitem{CH} R. Cannings,  M.P. Holland,{\it Differential operators on varieties with a quotient subvariety. } J. Algebra 170 (1994), no. 3, 735-753.

\bibitem{D} F. Dumas, {\it An Introduction to Non commutative polynomial invariants}, Lecture Notes, Homological methods and representations of non commutative algebras, Mar del Plata, Argentina, March 6-16, 2006.

\bibitem{EG} P.Etingof,  V.Ginzburg, {\it Symplectic reflection algebras, Calogero-Moser space, and deformed
Harish-Chandra homomorphism}, Invent. Math.  147 (2002), no 2,  243-348.

\bibitem{Fa}  Faith C.,  {\it Galois subrings of Ore domains are Ore domains}, Bull AMS, 78
(1972), no.6, 1077-1080.

\bibitem{FO} V.Futorny, S.Ovsienko, {\it Galois orders in skew monoid rings}, J. Algebra 324 (2010), 598-630.

\bibitem{FMO} V.Futorny, A.Molev, S.Ovsienko, {\it Gelfand-Kirillov conjecture and Gelfand-Tsetlin modules
for finite $W$-algebras}, Adv. Math., Advances in Mathematics 223 (2010), 773-796.

\bibitem{Le} H.W. Lenstra Jr.,  {\it Rational functions invariant under a finite abelian group}, Invent. Math. 25 (1974), 299-325.

\bibitem{L} T. Levausseur, {\it Anneaux d'operateurs differentiels},  Lecture Notes in Mahematics, 867 (1981),  157-173.


\bibitem{Kn} F. Knop, {\it Graded cofinite rings of differential operators}, Michigan Math. J.  54 (2006), 3-23. 

\bibitem{MR} J. C. McConnell, J. C. Robson, {\it Noncommutative Noetherian Rings}, J. Wiley $\&$ Sons,
New York, 1987.

\bibitem{Sp}  T.A. Springer, {\it Invariant theory}, Lecture Notes in Mahematics, 585, Springer, 1977, 120 pp.
 
\end{thebibliography}
\end{document}